\newcommand{\RN}[1]{%
  \textup{\uppercase\expandafter{\romannumeral#1}}%
}
\date{}
\newtheorem{theorem}{Theorem}[section]
\newtheorem{lemma}[theorem]{Lemma}
\newtheorem{proposition}[theorem]{Proposition}
\newtheorem{remark}[theorem]{Remark}
\numberwithin{equation}{section}
\begin{document}

\centerline{{\bf A note on Lusin-type approximation of  Sobolev functions}}
\centerline{{\bf on Gaussian spaces}}

\vskip .2in

\centerline{{\bf  Alexander Shaposhnikov$^{a,}$\footnote{Faculty of Mathematics, University of Bielefeld, D-33615 Bielefeld, Germany, e-mail: shal1t7@mail.ru, ashaposh@math.uni-bielefeld.de}}}

\vskip .2in
\centerline{{\bf Abstract}}

\vskip .1in
We establish
new approximation results in the sense of Lusin for Sobolev functions $f$ with $|\nabla f| \in L\log L$
on infinite-dimensional spaces equipped with  Gaussian measures.
The proof relies on some new pointwise estimate for the approximations based on the corresponding semigroup which can be of independent interest.

AMS Subject Classification: 60H07, 28C20, 46E35

Keywords: Sobolev class, Gaussian measure, Ornstein--Uhlenbeck semigroup, Lusin-type approximation

\vskip .2in

\section{Introduction}

We say that a function $f:X\rightarrow\mathbb{R}$ on a metric measure space $(X, d, m)$ 
is approximable in the sense of Lusin by Lipschitz functions if for any given $\varepsilon > 0$ there exists a
Lipschitz function $g: X \rightarrow \mathbb{R}$ and a Borel set
$S \subset X$ such that $m(X \setminus S) < \varepsilon$ and
$f \equiv g$ on $S$. A~quantitative version of this property can be formulated as follows:
\begin{equation}\label{ineq:q_lusin}
|f(x) - f(y)| \leq d(x, y)(g(x) + g(y))
\end{equation}
for some measurable nonnegative function $g$ and
$x, y \in X\setminus N$, where $N$ is a Borel set of $m$-measure zero.
For Sobolev or BV functions on
$\mathbb{R}^d$ equipped with the standard Lebesgue measure
$\lambda$,
F.C. Liu (see \cite{liu}) obtained the following important result:
$$
|f(x) - f(y)| \leq |x  - y|(M(x) + M(y)),
$$
$$
M(x) := C_d \sup_{r > 0}\frac{1}{\lambda(B(x, r))} \int_{B(x, r)}|\nabla f|\,d\lambda,
$$
see also the book \cite{Ziemer} for a detailed discussion of this problem.
In particular, for $p > 1$ and a function $f$ from the Sobolev class
$W^{1, p}$ in the inequality (\ref{ineq:q_lusin}) one can choose $g \in L^{p}$. Moreover, it is well-known 
that in the class of metric measure spaces $(X, d, m)$ satisfying
the doubling and $1$-Poincar\'e inequality, the property (\ref{ineq:q_lusin})
with $g \in L^{p}$ characterizes the class $W^{1,p}$,
while for general metric measure structures it is the basis of the definition of the so-called Hajlasz Sobolev functions, see e.g. \cite{Heinonen}. For metric measure spaces without the doubling property the classical finite dimensional arguments are not available anymore.
In the recent paper \cite{AmrosioBrueTrevisan} L. Ambrosio, E. Brue, D. Trevisan put forward an alternative approach based on some estimates
for heat semigroups which applies to Gaussian and $RCD(K, \infty)$ spaces. For Sobolev functions $f \in  W^{1, p}$, $p > 1$ on the Wiener space $(\mathcal{W}, \mathcal{H}, m)$ the result from \cite{AmrosioBrueTrevisan} reads as follows: there exists a version of
the function $f$, such that
$$
|f(x) - f(y)| \leq |x - y|_{\mathcal{H}}(M(x) + M(y)),
$$
$$
M(x) := C\Bigl(
\sup_{t > 0}T_{t}|\nabla_{\mathcal{H}} f|(x) +
\sup_{t > 0}T_{t}|\sqrt{1 - L}f|(x)\Bigr),
$$
where $\{T_{t}\}$ is the standard Ornstein-Uhlenbeck semigroup,
$L$ is its generator and $C$ is some universal constant.
In the Gaussian setting for $p > 1$ this theorem provides a natural
counterpart of the classical finite-dimensional results, however, for
$p = 1$ the arguments break down for multiple reasons. First, for
the Sobolev space $W^{1, 1}$ Meyer's equivalence is not available anymore, in particular,
$f$ from $W^{1, 1}$ does not need to belong to the domain of the operator $\sqrt{1 - L}$ in $L^1$.
Second, even for functions $f$ with $\sqrt{1 - L}f$ in $L^1$ it is  unknown if the maximal function
$$
\sup_{t > 0}T_{t}|\sqrt{1 - L}f|$$
is finite $m$-a.e. or not. In particular, whether the maximal operator
$$\sup_{t > 0}T_{t}g, \ g\in L^1(X, m)$$
is of weak $(1, 1)$ type in the infinite-dimensional case has been an open problem for a long time.
In this paper we construct a Lusin-type approximation for Sobolev functions $f $ with $|\nabla f| \in L\log L$ on Gaussian spaces using a modification of the approach from \cite{AmrosioBrueTrevisan}.
Our starting point is the result by I. Shigekawa \cite{Shigekawa} which gives a weaker form of Meyer's equivalence for $L^1$:
$$
\|\sqrt{-L}f\|_1 \leq C\| \nabla f \|_{L\log L}, \ \|\nabla f\|_1 \leq C\| \sqrt{-L}f \|_{L\log L}
$$
To overcome the lack of the weak bound for the maximal operator in
$L^1$ we modify the smoothing procedure. This enables us to obtain a dimension-independent bound using the classical Hopf--Dunford--Schwartz maximal inequality that might be of independent interest. The paper is organized as follows. Section 2 contains the main abstract semigroup--theoretic tools.
In Section 3 we discuss Shigekawa's bound and extend it to more general Ornstein-Uhlenbeck semigroups. Section 4 contains the main results.

\section{Abstract semigroup-theoretic results}

Let $(X, \mathcal{F}, m)$ be an abstract measure space,
where $m$ is a probability measure. Let $\{T_t\}$ be a symmetric
Markov semigroup acting on $L^2(X, \mathcal{F}, m)$ and let $L$
be its generator.
A semigroup of this class has a canonical extension to a contraction semigroup on all $L^p(X, \mathcal{F}, m)$ spaces.
For $p \in [1, \infty)$ and $f \in L^p(X, \mathcal{F}, m)$
we write $f \in D_{p}(\sqrt{-L})$ if there exists a sequence
$(f_n) \subset D(\sqrt{-L}) \cap L^p(X, \mathcal{F}, m)$
converging to $f$ in $L^p(X, \mathcal{F}, m)$ with
$(\sqrt{-L}f_n)$ converging to some function $g$ in $L^p(X, \mathcal{F}, m)$.
Using the symmetry of $\sqrt{-L}$ it is easy to see that if
the set of functions $h \in D(\sqrt{-L})\cap L^{p'}(X, \mathcal{F}, m)$
is dense in $L^{p'}(X, \mathcal{F}, m)$ in the weak-$*$ topology
(in duality with $L^p(X, \mathcal{F}, m)$) then  $g = \sqrt{-L}f$
is uniquely determined. In our cases of interest, where $\{T_t\}$
is an Ornstein-Uhlenbeck semigroup, the required density can be easily verified explicitly.

The next proposition is the classical Hopf--Dunford--Schwartz maximal inequality which will play the crucial role in the proof of the main results.
\begin{proposition}\label{pr:hopf}
Let $f \in L^{1}(X, \mathcal{F}, m)$. Then for any $\lambda > 0$
$$
m\Bigl( x: \sup_{t > 0}\frac{1}{t}\int_{[0, t]}T_{s}f\,ds \geq \lambda\Bigr)
\leq \frac{\|f\|_1}{\lambda}.
$$
\end{proposition}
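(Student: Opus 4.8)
The plan is to recognize this as the continuous-parameter Hopf maximal ergodic inequality and to reduce it to the classical discrete maximal ergodic lemma. Two preliminary remarks are in order: a symmetric Markov semigroup consists of positive operators, each of which is a contraction on $L^1(X,m)$ and on $L^\infty(X,m)$, and in the situations of interest (in particular for the Ornstein--Uhlenbeck semigroups considered below) one has $T_s\mathbf 1=\mathbf 1$; moreover, choosing a jointly measurable version of $(s,x)\mapsto T_sf(x)$ (which exists since $\{T_t\}$ is strongly continuous on $L^1$) and invoking Fubini's theorem, one sees that $t\mapsto\int_{[0,t]}T_sf(x)\,ds$ is absolutely continuous on $(0,\infty)$ for $m$-a.e.\ $x$, so that
\[
A^{*}f(x):=\sup_{t>0}\frac1t\int_{[0,t]}T_sf(x)\,ds
\]
is measurable and the supremum may be restricted to positive rational $t$.

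The key ingredient is the maximal ergodic lemma: for every $g\in L^1(X,m)$ one has $\int_{E_g}g\,dm\ge0$, where $E_g:=\{\,\sup_{t>0}\int_{[0,t]}T_sg\,ds>0\,\}$. I would derive this from its discrete prototype. For a positive $L^1$-contraction $T$ and $g\in L^1$ set $S_0g=0$, $S_kg=\sum_{j=0}^{k-1}T^jg$ and $M_Ng=\max_{0\le k\le N}S_kg\ge0$. From $S_kg=g+TS_{k-1}g\le g+TM_{N-1}g$ for $1\le k\le N$ (positivity of $T$) one gets the pointwise inequality $g+TM_{N-1}g\ge M_Ng$ on the set $\{M_Ng>0\}$, where the maximum defining $M_Ng$ is attained at an index $k\ge1$. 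Since $M_Ng$ vanishes off $\{M_Ng>0\}$, integrating this inequality and using $\int_XM_Ng\ge\int_XM_{N-1}g$ together with $\int_XTM_{N-1}g\le\int_XM_{N-1}g$ yields $\int_{\{M_Ng>0\}}g\,dm\ge0$; letting $N\to\infty$ and applying dominated convergence (the sets $\{M_Ng>0\}$ increase to $\{\sup_{k\ge1}S_kg>0\}$) gives $\int_{\{\sup_{k\ge1}S_kg>0\}}g\,dm\ge0$. The continuous statement then follows by running the same filling scheme with $\int_{[0,t]}$ in place of $\sum_{k<n}$ (equivalently, by a discretization argument passing to the iterates of $T_h$ and letting $h\downarrow0$); I would treat this bookkeeping as routine.

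Finally, I would apply the maximal ergodic lemma to $g:=f-\lambda$. Since $T_s\mathbf 1=\mathbf 1$ and $m$ is a probability measure, $\int_{[0,t]}T_s(f-\lambda)\,ds=\int_{[0,t]}T_sf\,ds-\lambda t$, so $E_{f-\lambda}=\{A^{*}f>\lambda\}$. Hence $\int_{\{A^{*}f>\lambda\}}(f-\lambda)\,dm\ge0$, i.e.
\[
\lambda\,m\bigl(\{A^{*}f>\lambda\}\bigr)\le\int_{\{A^{*}f>\lambda\}}f\,dm\le\|f^{+}\|_{1}\le\|f\|_{1},
\]
and replacing $\lambda$ by $\lambda-\varepsilon$ and letting $\varepsilon\downarrow0$ (using that $m$ is finite) upgrades this to the asserted bound for $\{A^{*}f\ge\lambda\}$. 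The only genuinely non-routine point is the maximal ergodic lemma itself, and one may of course also simply quote the Hopf--Dunford--Schwartz theorem (e.g.\ from Dunford--Schwartz, \emph{Linear Operators, Part I}, or from Stein's monograph on Littlewood--Paley theory, where the weak $(1,1)$ bound with constant $1$ is established for such semigroups).
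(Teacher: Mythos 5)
Your proof is correct, but it takes a genuinely different route from the paper, which offers no argument at all for this proposition and simply cites Dunford--Schwartz, Chapter VIII. You instead derive the inequality from first principles: Garsia's filling-scheme proof of the discrete Hopf maximal ergodic lemma for a positive $L^1$-contraction, the transfer to continuous time by applying it to the iterates of $T_h$ and to $g_h=\int_{[0,h]}T_s g\,ds$ and letting $h\downarrow 0$ (legitimate because $t\mapsto\int_{[0,t]}T_s f(x)\,ds$ is a.e.\ absolutely continuous, a fact the paper itself establishes via Fubini in the proof of Theorem~\ref{th:main_pointwise_approximation}), and finally the application to $g=f-\lambda$, where positivity, $T_s\mathbf{1}=\mathbf{1}$ (in fact $T_s\mathbf{1}\le\mathbf{1}$ would suffice) and the finiteness of $m$ are exactly what is needed; the $\varepsilon$-shift at the end correctly upgrades the bound from the set $\{\,\cdot>\lambda\}$ to $\{\,\cdot\ge\lambda\}$. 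The one step you label routine, the discrete-to-continuous bookkeeping, is precisely the content of the cited Dunford--Schwartz sections, and your sketch of it is the standard one. Your route has a concrete payoff: the form most often quoted, with $\sup_t\bigl|\frac1t\int_{[0,t]}T_s f\,ds\bigr|$, carries the constant $2$, whereas the paper uses the one-sided maximal function with constant $1$; your Hopf-lemma derivation yields exactly that one-sided constant-$1$ bound, making explicit where the Markov structure enters, at the cost of a page of classical ergodic theory that the paper chose to outsource to the literature.
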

\begin{proof}
See \cite{DunfordSchwartz}, chapter \RN{8} (6,7).
\end{proof}

Now let us introduce the operators
$\{A_t\}_{t > 0}$ and $\{M_t\}_{t > 0}$ as follows:
$$
A_t := \frac{1}{t}\int_{[t, 2t]}T_s\,ds, \
M_t := \frac{1}{t}\int_{[0, t]}T_s\,ds.
$$
It is easy to see that for a nonnegative function $g \in L^1$
$$
A_t g (x) \leq 2 M_{2t} g (x).
$$
Consequently, for any $f \in L^1$
\begin{equation}\label{eq:trivial_ineq}
\sup_{s > 0} A_{s} |f|  \leq 2 \sup_{s > 0} M_{s} |f|.
\end{equation}

\begin{theorem}\label{th:main_pointwise_approximation}
There exists a universal constant $C > 0$ such that for any
$f \in D_{1}(\sqrt{-L})$
$$
|A_{t}f(x) - f(x)| \leq C\sqrt{t} \sup_{s > 0} M_{s}|\sqrt{-L}f|(x) \
\text{for $m$-a.e.}\ x \in X.
$$
\end{theorem}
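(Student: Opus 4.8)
The plan is to prove the estimate first for \emph{nice} functions $f$ — those lying in the range of a spectral projection $\chi_{[\varepsilon,R]}(-L)$ with $0<\varepsilon<R<\infty$ — and then to remove this restriction by density, using only Proposition~\ref{pr:hopf}. For the reduction, observe that $A_th=h$ and $\sqrt{-L}h=0$ whenever $Lh=0$, so after subtracting the $\ker L$-component we may assume $f$ has no invariant part. If $f_n\in D(\sqrt{-L})\cap L^1$ with $f_n\to f$ and $\sqrt{-L}f_n\to g:=\sqrt{-L}f$ in $L^1$, then the truncations $\chi_{[\varepsilon,R]}(-L)f_n$ are nice, and a diagonal choice produces a sequence of nice functions converging to $f$ and whose $\sqrt{-L}$-images converge to $g$, both in $L^1$. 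Since $A_t$ is an $L^1$-contraction, and since $A_\tau\phi\le 2\cdot(2\tau)^{-1}\int_{[0,2\tau]}T_s\phi\,ds$ for $\phi\ge0$, so that $\phi\mapsto\sup_{\tau>0}A_\tau|\phi|$ is of weak type $(1,1)$ by Proposition~\ref{pr:hopf} (in particular $m$-a.e.\ finite on $L^1$), both sides of the asserted inequality are stable under this approximation — the left-hand side in $L^1$, the right-hand side in measure — so it is enough to treat nice $f$.

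For nice $f$, the elementary identity $\lambda^{-1/2}=\pi^{-1/2}\int_0^\infty s^{-1/2}e^{-s\lambda}\,ds$ and the spectral theorem give the representation $f=\pi^{-1/2}\int_0^\infty s^{-1/2}T_sg\,ds$, which converges absolutely in $L^1$ thanks to the exponential decay of $T_sg$. Applying $T_\sigma$ and substituting $\rho=s+\sigma$ produces the pointwise identity
\[
T_\sigma f-f=\frac{1}{\sqrt\pi}\Bigl(\int_\sigma^\infty\bigl[(\rho-\sigma)^{-1/2}-\rho^{-1/2}\bigr]T_\rho g\,d\rho-\int_0^\sigma\rho^{-1/2}T_\rho g\,d\rho\Bigr),
\]
in which both integrands are now absolutely integrable and the bracketed weight is nonnegative for $\rho>\sigma$. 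Averaging over $\sigma\in[t,2t]$ — recall $A_tf-f=\tfrac1t\int_t^{2t}(T_\sigma f-f)\,d\sigma$ — I would bound the second integral directly for each fixed $\sigma$, but the first one only \emph{after} carrying out the $\sigma$-average, which by Fubini rewrites it as $\pi^{-1/2}t^{-1}\int_t^\infty W(\rho)\,T_\rho g\,d\rho$ with $W(\rho):=\int_t^{\min(2t,\rho)}[(\rho-\sigma)^{-1/2}-\rho^{-1/2}]\,d\sigma$. A short computation shows that $W$ vanishes on $(0,t]$ and that $0\le W(\rho)\le Ct^2\rho^{-3/2}$ for $\rho>t$ with a universal $C$ — the key point being that on $[2t,\infty)$ the leading $\rho^{-1/2}$-terms cancel.

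The domination then reduces to the elementary inequality: for any measurable $w\ge0$ on $(0,\infty)$ and any $0\le\phi\in L^1$,
\[
\int_0^\infty w(\rho)\,T_\rho\phi\,d\rho\ \le\ \Bigl(\sum_{k\in\mathbb Z}2^k\sup_{\rho\in[2^k,2^{k+1}]}w(\rho)\Bigr)\,\sup_{\tau>0}A_\tau\phi\qquad m\text{-a.e.},
\]
which follows at once from $\int_{[2^k,2^{k+1}]}T_\rho\phi\,d\rho=2^kA_{2^k}\phi\ge0$. The bracketed ``dyadic $L^1$-norm'' of $w=\rho^{-1/2}\mathbf 1_{(0,\sigma)}$ is at most $C\sqrt\sigma$, and the one of $w=W$ is at most $Ct^{3/2}$ by the bounds above; applying the inequality with $\phi=|g|$ to each of the two pieces therefore yields
\[
\lvert A_tf-f\rvert\ \le\ \frac{C}{t}\Bigl(t^{3/2}+\int_t^{2t}\sqrt\sigma\,d\sigma\Bigr)\sup_{\tau>0}A_\tau|g|\ \le\ C\sqrt t\,\sup_{\tau>0}A_\tau|\sqrt{-L}f|\qquad m\text{-a.e.},
\]
with a universal constant, which settles the case of nice $f$ and hence the theorem.

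I expect the genuine obstacle to be exactly the singularity $(\rho-\sigma)^{-1/2}$ at $\rho=\sigma$ in the first inner integral: it is integrable, but one cannot bound $\int_\sigma^\infty[(\rho-\sigma)^{-1/2}-\rho^{-1/2}]T_\rho g\,d\rho$ for a single fixed $\sigma$ by a constant times $\sqrt\sigma\sup_{\tau}A_\tau|g|$, since such a bound would amount to a weak-$(1,1)$ estimate for the maximal operator $\sup_{s>0}T_s$, which fails in infinite dimensions. It is only the average over $\sigma\in[t,2t]$ that converts this singularity into the tame weight $W$, and this is precisely why the theorem must be phrased with the averaged operators $A_s$ rather than with the semigroup itself on the right-hand side. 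The remaining soft points — the conditional convergence of the subordination integral when $-L$ has no spectral gap, and the interchanges of integration — dissolve under the reduction to nice $f$, where all integrals converge absolutely.
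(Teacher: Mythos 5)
Your proposal is correct, and it shares the paper's skeleton — the same subordination kernel representation of $T_\sigma f-f$ in terms of $T_\rho\sqrt{-L}f$, and the same key idea that only the average over $\sigma\in[t,2t]$ tames the singularity at $\rho=\sigma$ and produces a bound by $\sup_{s>0}A_s|\sqrt{-L}f|$ — but the mechanism of the kernel estimate is genuinely different. The paper keeps the averaged kernel $U(s,t)$, integrates by parts in $s$ against the primitive of $s\mapsto T_s\sqrt{-L}f$, and proves $\int_0^\infty |s\,\partial_s U(s,t)|\,ds\le C\sqrt t$ by homogeneity plus an explicit case-by-case computation of $U(s,1)$, with a separate argument that the boundary terms vanish; you instead bound the averaged weight pointwise, $0\le W(\rho)\le Ct^2\rho^{-3/2}$ for $\rho>t$ (your cancellation computation is correct), and dominate $\int w(\rho)T_\rho|g|\,d\rho$ by the dyadic sum $\sum_k 2^k\sup_{[2^k,2^{k+1}]}w$ times $\sup_\tau A_\tau|g|$, which avoids differentiating the kernel and the boundary-term analysis altogether; both routes give the universal $C\sqrt t$. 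Your reduction step is also more explicit than the paper's: the paper reduces tersely to $f\in D(\sqrt{-L})$, while you reduce to spectrally localized $f$ (so that all integrals converge absolutely and $f=(-L)^{-1/2}\sqrt{-L}f$ is immediate) and justify the limit passage via the weak $(1,1)$ bound for $\sup_\tau A_\tau$ coming from Proposition~\ref{pr:hopf}; your observation that $A_t h=h$ and $\sqrt{-L}h=0$ on the fixed space handles the kernel component correctly. Two minor points: your argument yields the exceptional null set depending on $t$ (which matches the literal statement), whereas the paper's proof, via continuity of $t\mapsto A_tf(x)$ on a common full-measure set, produces a single $\Omega_f$ valid for all $t>0$ simultaneously, and that stronger form is what is used later (Lemma~\ref{le:approximation_by_a}); your estimate upgrades to that form by exactly the same rational-$t$ plus continuity argument, so nothing is lost, but it should be said. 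Also, your closing heuristic asserts that the weak $(1,1)$ bound for $\sup_{s>0}T_s$ \emph{fails} in infinite dimensions; the paper only states that this is a long-standing open problem, so that side remark overstates what is known (it does not affect your proof).
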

\begin{proof}
Using simple density arguments one can see that it is sufficient to prove this estimate just for functions from $D(\sqrt{-L})$. In this case
for the difference $T_{r}f - f$ we have the following classical representation:
$$
T_{r}f - f = \int_{[0, \infty)}K(s, r)T_{s}\sqrt{-L}f\,ds, \ r \geq 0,
$$
where
$$
K(s, r) := \frac{1}{\sqrt{\pi}}\Biggl(
\frac{\chi_{s > r}}{(s - r)^{1/2}} -
\frac{\chi_{s > 0}}{s^{1/2}}
\Biggr),
$$
e.g. see Proposition 2.1 in \cite{AmrosioBrueTrevisan} and also \cite{Stein}.
Then:
\begin{equation}\label{eq:main_representation}
A_{t}f - f  = \int_{[0, \infty)}U(s, t) T_{s}\sqrt{-Lf}\,ds,
\end{equation}
where
$$
U(s, t) := \frac{1}{t}\int_{[t, 2t]}K(s, r)\,dr.
$$

One can see that $K(s, r)$ is not smooth with respect to $s$
at $r$. However, it turns out that the ``averaged'' over $r$
version of $K$ is already absolutely continuous with respect to $s$
for all $s > 0$. Below we will show that there exists a universal constant
$C > 0$ such that for the function
$$
Q(s, t) := s\frac{\partial U}{\partial s}(s, t)
$$
the following inequality holds:
\begin{equation}\label{eq:integral_bound}
\int_{[0, \infty)}|Q(s,t)|\,ds \leq C\sqrt{t}.
\end{equation}
Now let us prove the inequality (\ref{eq:integral_bound}).
It is easy to see that the functions $K, U$ and $Q$
have the following homogeneity property: for any $a > 0$
$$
K(as, ar) = \frac{1}{\sqrt{a}}K(s, r),\
U(as, at) =  \frac{1}{\sqrt{a}}U(s, t),
$$
$$
Q(as, at) = \frac{1}{\sqrt{a}}Q(s, t).
$$
Then:
$$
\int_{[0, \infty)}|Q(s, t)|\,ds =
\int_{[0, \infty)}|Q(ts', t)|t\,ds' =
\sqrt{t}\int_{[0, 1)}|Q(s', 1)|\,ds'.
$$
Consequently, to establish the bound (\ref{eq:integral_bound}) it is sufficient to prove that
$$
\int_{[0, \infty)}|Q(s, 1)|\,ds < \infty.
$$
For $s \in (0, 1)$
$$
U(s, 1) = - \frac{1}{\sqrt{\pi}}\frac{1}{\sqrt{s}}.
$$
For $s \in (1, 2)$
$$
U(s, 1) = - \frac{1}{\sqrt{\pi}}\frac{1}{\sqrt{s}} +
\frac{2}{\sqrt{\pi}}\sqrt{s - 1}.
$$
For $s \in (2, \infty)$
\begin{multline*}
U(s, 1) = -\frac{1}{\sqrt{\pi}}\frac{1}{\sqrt{s}} +
\frac{2}{\sqrt{\pi}}\bigl(\sqrt{s - 1} - \sqrt{s - 2}\bigr) \\
= -\frac{1}{\sqrt{\pi}}\frac{1}{\sqrt{s}} +
\frac{2}{\sqrt{\pi}}\frac{1}{\sqrt{s - 1} + \sqrt{s - 2}} \\
= \frac{1}{\sqrt{\pi}}
\frac{
\sqrt{s} - \sqrt{s - 1} + \sqrt{s} - \sqrt{s - 2}
}{\sqrt{s}(\sqrt{s - 1} + \sqrt{s - 2})}
\end{multline*}
Now it is easy to verify that
$$
Q(s, 1) = s \frac{\partial U}{\partial s}(s, 1)
$$
is integrable on $[0, \infty)$.
Excluding if necessary a set of measure zero
by Proposition
\ref{pr:hopf} we can assume that 
$$
\sup_{s > 0} M_{s}|\sqrt{-L}f|  < \infty.
$$
Applying integration by parts we obtain the following equality:
\begin{multline*}
\int_{[\varepsilon, 1/\varepsilon)}U(s, t)T_{s}\sqrt{-L}f\,ds =
\int_{[\varepsilon, 1/\varepsilon)}U(s, t)
\frac{d}{d s}\Bigl(s M_{s}\sqrt{-L}f\Bigr)\,ds \\
=
- \int_{[\varepsilon, 1/\varepsilon)}\frac{\partial U(s, t)}{\partial s}sM_{s}\sqrt{-L}f\,ds \\+
U(\varepsilon^{-1}, t)\varepsilon^{-1} M_{1/\varepsilon}\sqrt{-L}f - U(\varepsilon, t)\varepsilon M_{\varepsilon}\sqrt{-L}f.
\end{multline*}
Since
$$
U(\varepsilon^{-1}, t) = O(\varepsilon^{3/2}), \
U(\varepsilon, t) = O(\varepsilon^{-1/2}) 
\ \text{as}
\ \varepsilon \to 0,
$$
then
$$
A_{t}f - f = -\int_{[0, \infty)}Q(s,t)M_{s}\sqrt{-L}f\,ds
$$
and the bound (\ref{eq:integral_bound}) yields
the required estimate
$$
|A_{t}f(x) - f(x)| \leq C\sqrt{t} \sup_{s > 0} M_{s}|\sqrt{-L}f|(x) \
\text{for $m$-a.e.}\ x \in X.
$$
\end{proof}

\section{Meyer-type inequality in $L^1$}

Let $m = N_Q$ be a centered Gaussian measure on 
$\mathbb{R}^d$ with the covariance operator $Q > 0$.
We will be concerned with the semigroup given by Mehler's formula (see e.g. \cite{Bogachev})
\begin{multline*}
P_{t}f(x) :=  \int_{H}f(e^{At}x + \sqrt{1 - e^{2At}}y)\,dm(y) \\
=\int_{H}f(e^{At}x + y)dN_{Q_t}(y) = \int_{H}f(y)dN_{e^{At}x, Q_t}(y),
\end{multline*}
where we have set
$$
A := -\frac{1}{2}Q^{-1}, \ Q_t := \int_{[0, t]}e^{2As}\,ds = Q\bigl(1 - e^{2At}\bigr)
$$
and $N_{e^{At}x, Q_t}$ denotes the unique Gaussian measure with mean $e^{At}x$ and covariance $Q_t$.
The generator of $\{P_t\}$ is the Ornstein--Uhlenbeck operator
$$
L := \frac{1}{2}\Delta + \langle Ax, \nabla \rangle
$$
and $N_Q$ is the unique invariant measure of $\{P_t\}$.
Although in this section we assume that the underlying space is finite-dimensional, the final inequalities do not include any dimension-dependent constants and are valid for the infinite-dimensional case as well, this can be justified by the standard approximation arguments.
In the $L^p$ setting Meyer's inequalities establish the equivalence of two kinds of norms on the Sobolev space: one is defined by means of the gradient and the other by means of the square root of the Ornstein--Uhlenbeck operator:
for any $p~\in~(1, \infty)$ there exist
positive constants $C_1, C_2$ such that for all
$f \in C_{0}^{\infty}(\mathbb{R}^d, \mathbb{R})$
$$
C_{1}\|\sqrt{1 - L}f\|_p \leq
\|\nabla f\|_p + \|f\|_p \leq C_{2}\|\sqrt{1 - L}f\|_p.
$$
For the case when $Q$ is the identity matrix this result was obtained in 
\cite{Meyer}, see also \cite{Pisier}. Later it was extended to general  
Ornstein--Uhlenbeck operators in \cite{Chojinowska-Michalik-Goldys}, \cite{Shigekawa92}.
When $p = 1$ this equivalence does not hold anymore, nevertheless, in \cite{Shigekawa} for
the Ornstein--Uhlenbeck operator
$$
\mathcal{L} = \Delta - \langle x, \nabla \rangle
$$
the following inequalities were put forward:
there exist
$C_{1}, C_{2} > 0$ such that for all
$f \in C_{0}^{\infty}$
$$
\|\sqrt{-L}f\|_1 \leq
C_1\|\nabla f\|_{L\log L},
$$
$$
\|\nabla f\|_1 \leq
C_{2}\|\sqrt{-L}f\|_{L\log L}.
$$
Here $L\log L$ is the space of functions such that
$$
\int |f| \log(1 + |f|)\,d\mu < \infty,
$$
equipped with the Orlicz--Luxemburg norm
\begin{equation}\label{def:l_log_l_norm}
\|f\|_{L\log L} := \inf \Bigl\{
\lambda > 0: \ \int \Phi(|f|/\lambda)\,d\mu \leq 1
\Bigr\},
\end{equation}
\begin{equation}\label{def:Phi}
\Phi(a) := \int_{[0, a]}\log(1 + t)\,dt.
\end{equation}
While the reasoning in \cite{Shigekawa} 
is based on some probabilistic arguments
which can be modified to cover the general case, 
below we present a simple analytical approach using the sharp $L^p$ estimates for the Riesz transform obtained in \cite{CaDr}. We thank the anonymous referee for suggesting this approach as it yields a shorter proof and works in a greater generality.
\begin{theorem}\label{th:main_l_log_l}
There exists $C > 0$ such that for any $f \in C_{0}^{\infty}$
$$
\|\sqrt{-L}f\|_{1} \leq C \|\nabla f\|_{L\log L}.
$$
\end{theorem}
\begin{proof}
Without loss of generality we may assume that 
$$
\|\nabla f\|_{L\log L} = 1.
$$
Let $g$ be a fixed $C_{b}^{\infty}$ function such that 
$$
\int g\,dm = 0, \ \|g\|_{\infty} \leq 1.
$$
Corollary~1 of Theorem~2 from \cite{CaDr} for $p > 1$ yields the bound
\begin{equation}\label{eq:main_riesz_l_p}
\|\nabla L^{-1/2}g\|_{p} \leq 12 (p^{*} - 1) \|g\|_{p},
\end{equation}
where 
$$
p^{*} := \max(p, q), \ \frac{1}{p} + \frac{1}{q} = 1.
$$
It is easy to see that these inequalities imply exponential integrability
$$
\int \exp\Bigl(
\frac{1}{36}|\nabla (-L)^{-1/2}g|
\Bigr)\,dm 
\leq \sum_{n = 0}^{\infty}
\frac{\bigl(
n/3
\bigr)^n}
{n!}
< \infty,
$$
where we have used the assumption $\|g\|_\infty \leq 1$.
Finally,
\begin{multline*}
\int g\sqrt{-L}f \,dm = 
\int \langle\nabla (-L)^{-1/2} g, \nabla f\rangle\,dm \\
\leq 
C'\biggl(
\|\nabla f\|_{L\log L} + 
\log 
\int \exp\Bigl(
\frac{1}{36}|\nabla (-L)^{-1/2}g|
\Bigr)\,dm 
\biggr) \leq  C.
\end{multline*}
Now it is trivial to complete the proof.
\end{proof}
The proof of Theorem \ref{th:main_l_log_l} 
essentially depends on the linear growth of the norm of the Riesz transform in 
$L^p$ as $p \to \infty$ and the standard dualization argument.
In turn, the next statement makes use of the assymptotics
of the Riesz transform's norm in $L^p$ as $p \to 1$.
\begin{theorem}
There exists $C > 0$ such that for any $f \in C_{0}^{\infty}$
$$
\|\nabla f\|_{1} \leq C \|\sqrt{-L}f\|_{L\log L}.
$$
\end{theorem}
\begin{proof}
For $p \in (1, 2)$ the estimate \ref{eq:main_riesz_l_p} 
for the norm of the Riesz transform can be formulated as follows:
$$
\|\nabla f\|_{p} \leq \frac{12}{p - 1} \|\sqrt{-L}f\|_{p}.
$$
The required inequality follows by the classical Yano's extrapolation theorem, see e.g. \cite{EdKr},  \cite{Yano}.
\end{proof}

\section{Main results}
\subsection{Sobolev functions on the Wiener space}
In this section we consider a Wiener space $(\mathcal{W}, \mathcal{H}, \mu)$, i.e.
$\mathcal{W}$ is a separable Banach space equipped with a centered Gaussian measure $\mu$ which is not concentrated on a closed proper linear subspace of $\mathcal{W}$, $\mathcal{H}$ is its Cameron--Martin space, see e.g. \cite{Bogachev}, \cite{Ustunel}.
Let us recall that a Borel probability measure $\mu$ on $\mathcal{W}$
is called centered Gaussian if every continuous linear functional
$l \in \mathcal{W}^{*}$
is a centered Gaussian random variable on $(\mathcal{W}, \mu)$, i.e.
$$
\int_{\mathcal{W}}\exp(il)\,d\mu  = \exp\Bigl(-\frac{1}{2}\int_{\mathcal{W}}l^2\,d\mu\Bigr).
$$
The Cameron--Martin space $\mathcal{H} = \mathcal{H}(\mu)$ of this measure is the set
of all vectors $h \in W$ with $|h|_{\mathcal{H}} < \infty$, where
$$
|h|_{\mathcal{H}} = \sup\bigl\{l(h): \ l \in \mathcal{W}^{*}, \|l\|_{L^2(\mu)} \leq 1\bigr\}.
$$
This is also the set of all vectors the shifts along which give measures equivalent to $\mu$.
The nondegeneracy of $\mu$ means that it is not concentrated on any proper linear subspace of $\mathcal{W}$.
It is known (see e.g. \cite{Bogachev}) that in this case $(\mathcal{H}, |\cdot|_{\mathcal{H}})$ is a separable Hilbert space densely embedded into $\mathcal{W}$.
Let  $\mathcal{F}\mathcal{C}_b^\infty$ denote the
class of all functions on $\mathcal{W}$ of the form
$$
f(x)=f_0(l_1(x),\ldots,l_n(x)), \quad f_0\in C_{b}^\infty (\mathbb{R}^n),
l_i\in \mathcal{W}^{*}.
$$
The gradient $\nabla_{\mathcal{H}}f$ of $f\in \mathcal{F}\mathcal{C}_b^\infty$ along the subspace $\mathcal{H}$ is defined by the equality
$$
(\nabla_{\mathcal{H}}f(x),h)_{\mathcal{H}}=\partial_h f(x).
$$
The Sobolev space $W^{1, p}$ (see \cite{Bogachev}, \cite{B10})
is defined as the completion  of $\mathcal{F}\mathcal{C}_b^\infty$
in the norm
$$
\|f\|_{1, p} := \| \nabla f_{\mathcal{H}} \|_{p} + \|f\|_{p},
$$
where $\|\cdot\|_p$ denotes the norm in $L^{p}(\mu)$.
In this context the Ornstein--Uhlenbeck semigroup $\{T_t\}$
is given by Mehler's formula
$$
T_{t}f (x) := \int_{W}f(e^{-t} x+ \sqrt{1-e^{-2t}}y)\,d\mu(y)
$$
and its generator is the standard Ornstein--Uhlenbeck operator $L$
$$
L := \Delta - \langle x, \nabla_{\mathcal{H}}\rangle.
$$
Similarly to $W^{1, p}$  one can define the space $W^{1, L\log L}$, where
the norm from $L^{p}(\mu)$ is replaced with the Orlicz norm
$L\log L$ (\ref{def:l_log_l_norm}):
$$
\|f\|_{1, L\log L} := \| \nabla f_{\mathcal{H}} \|_{L\log L} + \|f\|_{L\log L}.
$$
Alternatively, one can describe
the class $W^{1, L\log L}$ as a linear subspace of $W^{1,1}$
consisting of the functions for which the norm of the gradient
$\|\nabla f\|_{L\log L}$ is finite. 

Now let us recall the log-convexity property of the semigroup $\{T_{t}\}$
which will play an important role below.
\begin{lemma}\label{le:log_convex}
For every nonnegative Borel function $g \in L^1$ and every $t > 0$
the map $\log T_{t}g$, where $T_{t}g$ is defined by Mehler's formula, is $-\frac{1}{t}$-convex with respect to the Cameron--Martin distance, i.e.
$$
T_{t}g((1 - s)x_0 + sx_{1}) \leq
\exp\Biggl\{\frac{s(1-s)}{2t}|x_1 - x_0|^{2}_{\mathcal{H}}\Biggr\}
(T_{t}g(x_0))^{1 - s} (T_{t}g(x_1))^{s}.
$$
for every $x_0, x_1 \in \mathcal{W}$ with $x_0 - x_1 \in \mathcal{H}$
and $s \in [0, 1]$.
\end{lemma}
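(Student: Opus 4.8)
The plan is to derive the estimate directly from Mehler's formula, combining the Cameron--Martin shift formula with Hölder's inequality; no compactness or dimension reduction is needed, although one could alternatively prove the bound first in finite dimensions — where $\mu$ has an explicit Gaussian density and the computation below is just a change of variables against Lebesgue measure — and then pass to the infinite-dimensional case by the usual cylindrical approximation.

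First one writes $h := x_1 - x_0$, which lies in $\mathcal{H}$ by hypothesis, and $x_s := (1-s)x_0 + sx_1 = x_0 + sh$, so that $x_s$ differs from $x_0$ by the Cameron--Martin vector $sh$. Put $b := \sqrt{1 - e^{-2t}}$, $c := e^{-t}/b$, $M := |h|_{\mathcal{H}}^2 = |x_1 - x_0|_{\mathcal{H}}^2$, and $G(y) := g(e^{-t}x_0 + by)$. Since $e^{-t}(x_0 + sh) + by = e^{-t}x_0 + b(y + sc\,h)$, Mehler's formula gives $T_t g(x_0 + sh) = \int_{\mathcal{W}} G(y + sc\,h)\,d\mu(y)$. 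Now $sc\,h \in \mathcal{H}$, so the Cameron--Martin theorem permits the translation $y \mapsto y + sc\,h$: the image of $\mu$ under this map is absolutely continuous with respect to $\mu$ with density $\exp\bigl(sc\,\hat h - \tfrac12 s^2 c^2 M\bigr)$, where $\hat h \in L^2(\mu)$ denotes the measurable linear functional associated with $h \in \mathcal{H}$ (so $h \mapsto \hat h$ is linear and $\|\hat h\|_{L^2(\mu)} = |h|_{\mathcal{H}}$). Hence, for every $s \in [0,1]$,
\[
T_t g(x_0 + sh) = \int_{\mathcal{W}} \Psi_s\,d\mu, \qquad \Psi_s(y) := G(y)\,\exp\!\Bigl(sc\,\hat h(y) - \tfrac12 s^2 c^2 M\Bigr),
\]
and the endpoints $s = 0, 1$ recover $T_t g(x_0)$ and $T_t g(x_1)$.

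The key point is then the pointwise identity, valid $\mu$-a.e. because $g \ge 0$ (so that $G = G^{1-s}G^{s}$),
\[
\Psi_s = \exp\!\Bigl(\tfrac12 c^2 s(1-s)M\Bigr)\,\Psi_0^{\,1-s}\,\Psi_1^{\,s},
\]
which is checked simply by collecting the exponents. Integrating against $\mu$, taking the constant out of the integral, and applying Hölder's inequality with the conjugate exponents $\tfrac{1}{1-s}$ and $\tfrac{1}{s}$ (the cases $s \in \{0,1\}$ being trivial) yields
\[
T_t g(x_0 + sh) \le \exp\!\Bigl(\tfrac12 c^2 s(1-s)M\Bigr)\,\bigl(T_t g(x_0)\bigr)^{1-s}\bigl(T_t g(x_1)\bigr)^{s}.
\]
Finally $\tfrac12 c^2 = \dfrac{1}{2(e^{2t}-1)} \le \dfrac{1}{4t} \le \dfrac{1}{2t}$, since $e^{2t}-1 \ge 2t$, so $\tfrac12 c^2 s(1-s)M \le \dfrac{s(1-s)}{2t}\,|x_1-x_0|_{\mathcal{H}}^2$, which is the claim (in fact with the sharper constant $\tfrac{1}{2(e^{2t}-1)}$ in place of $\tfrac{1}{2t}$).

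I expect the only delicate point to be the Cameron--Martin step in infinite dimensions, and the thing to watch is that the argument uses only $h = x_1 - x_0 \in \mathcal{H}$, never $x_0 \in \mathcal{H}$: the three measures $N_{e^{-t}x_s, Q_t}$ need not be comparable with $\mu$ when $x_0 \notin \mathcal{H}$, but they are pairwise mutually absolutely continuous, being translates of one another along $\mathcal{H}$, and that is precisely what the substitution $y \mapsto y + sc\,h$ — which rewrites everything against the single reference measure $\mu$ — exploits. That $\hat h$ is only defined $\mu$-almost everywhere is harmless, since it enters only under the integral against $\mu$; and if $g \in L^1(\mu)$ only, one reads the conclusion allowing the value $+\infty$, which Hölder's inequality accommodates. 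The algebraic identity for $\Psi_s$ and the Hölder estimate themselves are entirely routine.
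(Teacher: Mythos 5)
Your argument is correct: the Mehler--formula representation combined with the Cameron--Martin change of variables and H\"older's inequality is exactly the standard proof of this log-convexity property, and your bookkeeping (the identity $\Psi_s=\exp\bigl(\tfrac12 c^2 s(1-s)M\bigr)\Psi_0^{1-s}\Psi_1^{s}$ and the estimate $\tfrac12 c^2=\tfrac{1}{2(e^{2t}-1)}\le\tfrac{1}{2t}$) is accurate, in fact yielding a slightly sharper constant than stated. The paper itself does not prove the lemma but only cites Lemma 3.4 of Ambrosio--Bru\'e--Trevisan and Lemma 5.14 of Bogachev's survey, and your proof is essentially the one underlying those references, so there is nothing to add beyond noting that your remarks on the pointwise use of Mehler's formula for nonnegative Borel $g$ (allowing the value $+\infty$) and on needing only $x_1-x_0\in\mathcal{H}$ are precisely the right points of care.
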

\begin{proof}
See e.g. Lemma 3.4 in \cite{AmrosioBrueTrevisan} or
Lemma 5.14 in \cite{B_OU}.
\end{proof}
It is worth noting that in Lemma \ref{le:log_convex}
it is important that we  consider the version of $T_t g$
given by Mehler's formula in the  {\it pointwise} sense. This is possible since $g$ is nonnegative and Borel.

\begin{lemma}
Let $g$ be a nonnegative Borel function in $L^1$ and $t > 0$.
Then for the function
$$
A_{t}g  = \frac{1}{t}\int_{[t, 2t]}T_{s}g\,ds,
$$
where $T_{s}g$ is defined by Mehler's formula,
and every $x_0, x_1 \in W$ with $x_0 - x_1 \in H$, $s \in [0, 1]$
the following inequality holds:
$$
A_{t}g((1 -s)x_0 + sx_1) \leq
\exp\Biggl\{\frac{s(1-s)}{2t}|x_1 - x_0|^{2}_{\mathcal{H}}\Biggr\}
(A_{t}g(x_0))^{1 - s}(A_{t}g(x_1))^s.
$$
\end{lemma}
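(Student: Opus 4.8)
The strategy is to upgrade the pointwise log-convexity estimate of Lemma~\ref{le:log_convex}, which holds for each individual operator $T_s$, to the averaged operator $A_t = \frac{1}{t}\int_{[t,2t]}T_s\,ds$ by integrating over $s \in [t,2t]$ and applying a H\"older-type inequality under the integral sign. The point is that the Gaussian prefactor $\exp\{\frac{s(1-s)}{2r}|x_1-x_0|^2_{\mathcal H}\}$ appearing in Lemma~\ref{le:log_convex} for $T_r$ is \emph{monotone} in the smoothing parameter $r$: since $r \geq t$ for every $r$ in the averaging interval $[t,2t]$, we have $\frac{s(1-s)}{2r} \leq \frac{s(1-s)}{2t}$, so the single universal prefactor $\exp\{\frac{s(1-s)}{2t}|x_1-x_0|^2_{\mathcal H}\}$ dominates all of them simultaneously. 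This is what makes the argument go through.

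First I would write, for each fixed $r \in [t,2t]$, the pointwise bound from Lemma~\ref{le:log_convex} (with $t$ there replaced by $r$),
$$
T_{r}g((1-s)x_0 + sx_1) \leq
\exp\Bigl\{\tfrac{s(1-s)}{2t}|x_1-x_0|^{2}_{\mathcal H}\Bigr\}
\bigl(T_{r}g(x_0)\bigr)^{1-s}\bigl(T_{r}g(x_1)\bigr)^{s},
$$
using the monotonicity observed above to replace $\frac{1}{2r}$ by $\frac{1}{2t}$. Then I would average both sides over $r \in [t,2t]$ with the normalized measure $\frac{1}{t}\,dr$. On the left this produces exactly $A_t g((1-s)x_0 + sx_1)$. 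On the right the prefactor is constant in $r$ and comes out of the integral, leaving
$$
A_{t}g((1-s)x_0+sx_1) \leq
\exp\Bigl\{\tfrac{s(1-s)}{2t}|x_1-x_0|^{2}_{\mathcal H}\Bigr\}\,
\frac{1}{t}\int_{[t,2t]}\bigl(T_{r}g(x_0)\bigr)^{1-s}\bigl(T_{r}g(x_1)\bigr)^{s}\,dr.
$$

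To finish I would apply H\"older's inequality to the remaining integral with conjugate exponents $\frac{1}{1-s}$ and $\frac{1}{s}$ (valid for $s \in (0,1)$; the endpoints $s=0,1$ are trivial), obtaining
$$
\frac{1}{t}\int_{[t,2t]}\bigl(T_{r}g(x_0)\bigr)^{1-s}\bigl(T_{r}g(x_1)\bigr)^{s}\,dr
\leq \Bigl(\tfrac{1}{t}\int_{[t,2t]}T_{r}g(x_0)\,dr\Bigr)^{1-s}
\Bigl(\tfrac{1}{t}\int_{[t,2t]}T_{r}g(x_1)\,dr\Bigr)^{s}
= \bigl(A_t g(x_0)\bigr)^{1-s}\bigl(A_t g(x_1)\bigr)^{s},
$$
which yields the claimed inequality. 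The only point requiring a little care — and the main (mild) obstacle — is ensuring that all the pointwise manipulations are legitimate: since $g$ is nonnegative and Borel, $T_r g(x)$ is well-defined pointwise by Mehler's formula for every $x$ and every $r>0$ (and is finite $\mu$-a.e. because $g \in L^1$), so the integrals over $r$ make sense as (possibly infinite) integrals of nonnegative functions, and H\"older applies verbatim; if $A_t g(x_0)$ or $A_t g(x_1)$ is infinite the inequality is trivially true, and otherwise everything is finite and the chain of inequalities is rigorous.
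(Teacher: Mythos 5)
Your proof is correct and follows essentially the same route as the paper: apply Lemma \ref{le:log_convex} to each $T_r$ with $r\in[t,2t]$, use $r\geq t$ to replace the prefactor $\exp\{\tfrac{s(1-s)}{2r}|x_1-x_0|^2_{\mathcal H}\}$ by $\exp\{\tfrac{s(1-s)}{2t}|x_1-x_0|^2_{\mathcal H}\}$, average in $r$, and finish with H\"older's inequality with exponents $\tfrac{1}{1-s}$ and $\tfrac{1}{s}$. Your added remarks on the endpoint cases and possible infinities are fine but not needed beyond what the paper does.
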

\begin{proof}
Indeed, the required bound follows from Lemma \ref{le:log_convex}
and the standard H\"older's inequality:
\begin{multline*}
A_{t}g((1 -s)x_0 + sx_1) =
\frac{1}{t}\int_{[t, 2t]}T_{u}g((1 -s)x_0 + sx_1)\,du\\
\leq
\frac{1}{t}\int_{[t, 2t]}
\exp\Biggl\{\frac{s(1-s)}{2u}|x_1 - x_0|^{2}_{\mathcal{H}}\Biggr\}
(T_{u}g(x_0))^{1 - s} (T_{u}g(x_1))^{s}\,du\\
\leq
\exp\Biggl\{\frac{s(1-s)}{2t}|x_1 - x_0|^{2}_{\mathcal{H}}\Biggr\}
\frac{1}{t}\int_{[t, 2t]}(T_{u}g(x_0))^{1 - s} (T_{u}g(x_1))^{s}\,du\\
\leq
\exp\Biggl\{\frac{s(1-s)}{2t}|x_1 - x_0|^{2}_{\mathcal{H}}\Biggr\} \\
\times \Biggl[
\frac{1}{t}\int_{[t, 2t]}T_{u}g(x_0)\,du
\Biggr]^{1 -s}
\Biggl[
\frac{1}{t}\int_{[t, 2t]}T_{u}g(x_1)\,du
\Biggr]^{s}
\end{multline*}
\end{proof}

\begin{lemma}\label{le:lip}
Let $f \in W^{1, 1}$.
Then there exists a Borel set $\Omega_{f}$
with $\mu(\Omega_{f}) = 1$ such that for any $t > 0$,
$x_0, x_1 \in \Omega_{f}$ with $x_0 - x_1 \in \mathcal{H}$
$$
|A_{t}f(x_1) - A_{t}f(x_0)| \leq
|x_1 - x_0|_{\mathcal{H}} e^{\frac{|x_1 - x_0|_{\mathcal{H}}^2}{4t}}\bigl(
A_{t}|\nabla_{\mathcal{H}} f|(x_1) +
A_{t}|\nabla_{\mathcal{H}} f|(x_0)
\bigr).
$$
\end{lemma}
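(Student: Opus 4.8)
I would deduce the estimate from the $(-1/t)$-log-convexity of the smoothed operators, combined with the commutation relation $\nabla_{\mathcal H}T_s=e^{-s}T_s\nabla_{\mathcal H}$ and the quasi-invariance of $\mu$ under Cameron--Martin translations. \emph{Good pointwise versions.} Fix $t>0$ and write $f=f_+-f_-$; then $A_tf=A_tf_+-A_tf_-$, where each $A_tf_\pm$ is defined pointwise by Mehler's formula (this is $\mu$-a.e.\ well defined and independent of the chosen Borel versions of $f_\pm$ by a standard Fubini argument), while $A_tw$ with $w:=|\nabla_{\mathcal H}f|\in L^1$ is defined pointwise everywhere in $[0,\infty]$ since $w$ is nonnegative and Borel. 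Applying the log-convexity of $A_tg$ from the preceding lemma with $g=f_\pm$ to triples of points on a fixed Cameron--Martin segment $s\mapsto x_0+sh$, $h\in\mathcal H$, one sees that $s\mapsto\log A_tf_\pm(x_0+sh)+\tfrac{|h|_{\mathcal H}^2}{2t}s^2$ is convex (if $f_\pm\not\equiv0$, the remaining cases being trivial), hence $s\mapsto A_tf_\pm(x_0+sh)$, and therefore $s\mapsto A_tf(x_0+sh)$, is locally Lipschitz, in particular continuous, along Cameron--Martin lines; the same argument with $g=w$ gives the continuity of $A_tw$ along such lines. Exactly as in the proof of Theorem~\ref{th:main_pointwise_approximation}, a Fubini argument then produces a Borel set of full measure off which $A_\bullet f$ and $A_\bullet w$ are finite and continuous in $t$.

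\emph{The estimate for a fixed direction.} Fix $h\in\mathcal H$. The commutation relation between $\nabla_{\mathcal H}$ and $T_s$ (equivalently, the absolute continuity of $f\in W^{1,1}$ along $\mu$-a.e.\ line in the direction $h$) together with Mehler's formula yields, for every $s>0$ and $\mu$-a.e.\ $x$,
$$ T_sf(x+h)-T_sf(x)=e^{-s}\int_{[0,1]}\langle T_s\nabla_{\mathcal H}f(x+rh),h\rangle_{\mathcal H}\,dr . $$
Integrating over $s\in[t,2t]$ and using the pointwise bound $e^{-s}|T_s\nabla_{\mathcal H}f|_{\mathcal H}\le T_s|\nabla_{\mathcal H}f|=T_sw$ (Jensen's inequality inside Mehler's formula, with $T_sw$ the Borel version appearing in the log-convexity lemma), we obtain, for $\mu$-a.e.\ $x$,
$$ |A_tf(x+h)-A_tf(x)|\le|h|_{\mathcal H}\int_{[0,1]}A_tw(x+rh)\,dr . $$
Applying the log-convexity of $A_tg$ with $g=w$, $x_0=x$, $x_1=x+h$, bounding $r(1-r)\le\tfrac14$ and $(A_tw(x))^{1-r}(A_tw(x+h))^r\le A_tw(x)+A_tw(x+h)$, and integrating in $r$, we arrive at
$$ |A_tf(x+h)-A_tf(x)|\le|h|_{\mathcal H}\,e^{|h|_{\mathcal H}^2/(4t)}\bigl(A_tw(x)+A_tw(x+h)\bigr)\qquad\text{for $\mu$-a.e.\ }x, $$
with room to spare in the exponent.

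\emph{Assembly of $\Omega_f$.} Let $D\subset\mathcal H$ and $S\subset(0,\infty)$ be countable dense sets; for $(h,t)\in D\times S$ let $N_{h,t}$ be the $\mu$-null set on which the last inequality fails, and put
$$ \Omega_f:=\bigl\{x:\ A_tf(x),\,A_tw(x)\ \text{finite and continuous in }t\bigr\}\ \setminus\ \bigcup_{(h,t)\in D\times S}\bigl(N_{h,t}\cup(N_{h,t}+h)\bigr). $$
By the Cameron--Martin theorem $\mu(N_{h,t}+h)=0$ for $h\in\mathcal H$, so $\mu(\Omega_f)=1$. Given $x_0,x_1\in\Omega_f$ with $h:=x_1-x_0\in\mathcal H$ and $t>0$, choose $h_n\in D$ with $h_n\to h$ in $\mathcal H$ and $t_n\in S$ with $t_n\to t$; the displayed inequality holds for the pair $(x_0,x_0+h_n)$ at time $t_n$, and letting $n\to\infty$, using the continuity of $A_\bullet f$ and $A_\bullet w$ both along Cameron--Martin lines and in $t$ established above (at the points $x_0$ and $x_1$), yields the asserted bound for $(x_0,x_1)$.

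The main obstacle is precisely this last assembly: the estimate is produced only as a ``$\mu$-a.e.\ $x$, for each fixed $h$'' statement, and upgrading it to hold simultaneously on a single full-measure set for \emph{all} admissible pairs and all $t$ requires both the quasi-invariance of $\mu$ under $\mathcal H$-shifts (so that the translated exceptional sets remain negligible) and the \emph{everywhere} regularity of $A_tf$ and $A_tw$ along Cameron--Martin directions, which is exactly what the log-convexity/semiconvexity supplied by the preceding two lemmas provides; a secondary technical point is the careful justification of the pointwise Mehler identities for a general $f\in W^{1,1}$ rather than merely for smooth cylindrical functions.
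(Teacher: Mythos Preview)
Your core computation---commutation $\nabla_{\mathcal H}T_s=e^{-s}T_s\nabla_{\mathcal H}$, integration along the segment, and log-convexity of $A_tw$---matches the paper's argument exactly. The difference, and the genuine gap, is in how you handle general $f\in W^{1,1}$ versus the assembly of the full-measure set $\Omega_f$.

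The problem is the limit $h_n\to h$. Your log-convexity argument gives continuity of $s\mapsto A_tf_\pm(x_0+sh)$ along each \emph{fixed} Cameron--Martin line, but in the assembly you need $A_{t_n}f(x_0+h_n)\to A_tf(x_1)$ and $A_{t_n}w(x_0+h_n)\to A_tw(x_1)$ as $h_n\to h$ in the $\mathcal H$-norm, with the directions $h_n-h$ varying. That is continuity on the coset $x_1+\mathcal H$ in the $\mathcal H$-topology, and one-dimensional semiconvexity does not yield it: a function that is convex along every line of an infinite-dimensional Hilbert space can still be discontinuous (e.g.\ a discontinuous linear functional). You would need to know, in addition, that $A_tf_\pm$ is finite on an $\mathcal H$-ball around $x_1$, which is not established for $f\in L^1$. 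Nor can you restrict to $h_n$ along the single line through $x_0$ and $x_1$, since that line typically misses the fixed countable set $D$.

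The paper sidesteps this entirely: it first proves the inequality for smooth cylindrical $f$, where the estimate holds for \emph{every} pair $x_0,x_1$ with $x_1-x_0\in\mathcal H$, and then approximates a general $f\in W^{1,1}$ by such functions in the $W^{1,1}$-norm. Passing to a subsequence gives $A_tf_n\to A_tf$ and $A_t|\nabla_{\mathcal H}f_n|\to A_t|\nabla_{\mathcal H}f|$ on a full-measure set $\Omega_{f,t}$; since the inequality for $f_n$ holds at every pair, one can take the limit in $n$ with $x_0,x_1\in\Omega_{f,t}$ fixed. The continuity-in-$t$ step to reach a single $\Omega_f$ is then the same as yours. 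The upshot: approximate in $f$, not in $h$.
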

\begin{proof}
Let $h := x_1 - x_0$, $h \in \mathcal{H}$.
We first assume that $t > 0$ is fixed and $f$ is a smooth cylindrical function.
Then
\begin{multline*}
|A_{t}f(x_1) - A_{t}f(x_0)| =
\Biggl| \int_{[0,1]} \langle \nabla_{\mathcal{H}} A_{t}f((1-s)x_0 + sx_1), h\rangle\,ds\Biggr| \\
\leq |h| \int_{[0,1]} | \nabla_{\mathcal{H}} A_{t}f((1-s)x_0 + sx_1)|\,ds,
\end{multline*}
\begin{multline*}
|\nabla_{H} A_{t}f((1-s)x_0 + sx_1)| =
\Bigl|
\frac{1}{t}\int_{[t,2t]} \nabla_{\mathcal{H}}T_{u}f((1-s)x_0 + sx_1)\,du
\Bigr| \\
=
\Bigl|
\frac{1}{t}\int_{[t,2t]} e^{-u}T_{u}\nabla_{\mathcal{H}}f((1-s)x_0 + sx_1)\,du
\Bigr|
\\
\leq
\frac{1}{t}\int_{[t,2t]}|T_{u}\nabla_{\mathcal{H}}f((1-s)x_0 + sx_1)|\,du \\
= A_{t} |\nabla_{\mathcal{H}}f|((1-s)x_0 + sx_1)\\
 \leq
e^{\frac{|x_1 - x_0|^2}{4t}}
(A_{t}|\nabla_{\mathcal{H}}f|(x_0))^{1 - s}
(A_{t}|\nabla_{\mathcal{H}}f|(x_1))^{s}
\\
\leq
e^{\frac{|x_1 - x_0|^2}{4t}}\bigl(
A_{t}|\nabla_{\mathcal{H}}f|(x_0) + A_{t}|\nabla_{\mathcal{H}}f|(x_1)
\bigr).
\end{multline*}
Therefore,
$$
|A_{t}f(x_1) - A_{t}f(x_0)| \leq
|x_1 - x_0|_H e^{\frac{|x_1 - x_0|^2}{4t}}\bigl(
A_{t}|\nabla_{\mathcal{H}} f|(x_1) +
A_{t}|\nabla_{\mathcal{H}} f|(x_0)
\bigr).
$$
Now for a given $f \in W^{1,1}$ let us find a sequence of smooth cylindrical functions $(f_n)$ converging to $f$ in $W^{1,1}$.
It is easy to see that
by passing to a subsequence we may assume that
$A_{t}f_n$, $A_{t}|\nabla_{\mathcal{H}} f_n|$ converge to
$A_{t}f$ and $A_{t}|\nabla_{\mathcal{H}} f|$ respectively
in $L^1$ and on some set $\Omega_{f, t}$ of full measure.
Then for any $x_0, x_1 \in \Omega_{f, t}$ with $x_1 - x_0 \in \mathcal{H}$ we have 
\begin{multline*}
|A_{t}f(x_1) - A_{t}f(x_0)| = \lim_{n\to\infty}
|A_{t}f_n(x_1) - A_{t}f_n(x_0)| \\
\leq
\lim_{n\to\infty} |x_1 - x_0|_{\mathcal{H}}
e^{\frac{|x_1 - x_0|^2}{4t}}\bigl(
A_{t}|\nabla_{\mathcal{H}} f_n|(x_1) +
A_{t}|\nabla_{\mathcal{H}} f_n|(x_0)
\bigr)
\\
=
|x_1 - x_0|_{\mathcal{H}}
e^{\frac{|x_1 - x_0|^2}{4t}}\bigl(
A_{t}|\nabla_{\mathcal{H}} f|(x_1) +
A_{t}|\nabla_{\mathcal{H}} f|(x_0).
\end{multline*}
Now similarly to the proof of Theorem \ref{th:main_pointwise_approximation} we can notice that there exists
a set $\Omega'_{f}$ of full measure such that for every $x \in \Omega'_{f}$ the mappings
$$
t \mapsto A_{t}(x), \ t \mapsto A_{t}|\nabla_{\mathcal{H}} f|(x)
$$
are continuous on $(0, \infty)$. It easy to see that for any
$x_0, x_1 \in \Omega_f$ with $x_1 - x_0 \in \mathcal{H}$ and any $t > 0$
$$
|A_{t}f(x_1) - A_{t}f(x_0)| \leq
|x_1 - x_0|_{\mathcal{H}} e^{\frac{|x_1 - x_0|^2}{4t}}\bigl(
A_{t}|\nabla_{\mathcal{H}} f|(x_1) +
A_{t}|\nabla_{\mathcal{H}} f|(x_0)
\bigr),
$$
where
$$
\Omega_f := \Omega'_f \cap \bigcap_{t_i \in \mathbb{Q} \cap [0, \infty)}\Omega_{f,t_i}.
$$
\end{proof}

\begin{lemma}\label{le:wiener_l_log_l}
Let $f \in W^{1,L\log L}$, i.e.
$$
\int_{\mathcal{W}}|\nabla_{\mathcal{H}}f(x)|\log(1 + |\nabla_{\mathcal{H}} f(x)|)\,d\mu(x) < \infty.
$$
Then $f \in D_{1}(\sqrt{-L})$ and
$$
\|\sqrt{-L}f\|_1 \leq C\|\nabla_{\mathcal{H}}f\|_{L\log L},
$$
where $C$ is some positive constant which does not depend on $f$.
\end{lemma}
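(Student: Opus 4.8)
The plan is to derive the estimate from the finite-dimensional Meyer-type bound of Theorem~\ref{th:root_upper_bound}, taken with $\alpha=0$, by a density argument, and then to read off $f\in D_{1}(\sqrt{-L})$ directly from the definition of that class given in Section~2. First I would transfer Theorem~\ref{th:root_upper_bound} from $C_{0}^{\infty}(\mathbb{R}^{d},\mathbb{R})$ to smooth cylindrical functions on $\mathcal{W}$. For $\alpha=0$ that theorem reads $C(0)\|\sqrt{-L}\varphi\|_{1}\le\|\nabla\varphi\|_{L\log L}$ with a constant independent of $d$. Given $\varphi\in\mathcal{F}\mathcal{C}_{b}^{\infty}$, writing $\varphi(x)=\varphi_{0}(l_{1}(x),\dots,l_{n}(x))$ and replacing $l_{1},\dots,l_{n}$ by an $L^{2}(\mu)$-orthonormal system spanning the same subspace reduces the computation of $\|\sqrt{-L}\varphi\|_{1}$, $\|\varphi\|_{1}$ and $\|\nabla_{\mathcal{H}}\varphi\|_{L\log L}$ to $\mathbb{R}^{n}$ with its standard Gaussian measure and standard Ornstein--Uhlenbeck operator, where the dimension-free constant applies; a cutoff $\varphi_{0}\mapsto\varphi_{0}\,\chi(\cdot/R)$ with $\chi\in C_{0}^{\infty}$ equal to $1$ near the origin, letting $R\to\infty$ (Fatou on the left, using the Dirichlet-form identity $\|\sqrt{-L}g\|_{2}=\|\nabla g\|_{2}$ to pass to the limit there, and dominated convergence in $L\log L$ on the right) then upgrades the inequality to all of $C_{b}^{\infty}(\mathbb{R}^{n})$, so that
$$
\|\sqrt{-L}\varphi\|_{1}\le C\,\|\nabla_{\mathcal{H}}\varphi\|_{L\log L}\qquad\text{for every }\varphi\in\mathcal{F}\mathcal{C}_{b}^{\infty}.
$$

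Next I would run the approximation. By the definition of $W^{1,L\log L}$ as a completion, pick $(f_{k})\subset\mathcal{F}\mathcal{C}_{b}^{\infty}$ with $\|f_{k}-f\|_{L\log L}\to0$ and $\|\nabla_{\mathcal{H}}(f_{k}-f)\|_{L\log L}\to0$; since $\mu$ is a probability measure, $\|\cdot\|_{1}\le c\,\|\cdot\|_{L\log L}$, so $f_{k}\to f$ in $L^{1}(\mu)$ and $\|\nabla_{\mathcal{H}}f_{k}\|_{L\log L}\to\|\nabla_{\mathcal{H}}f\|_{L\log L}$. Applying the previous display to the differences $f_{k}-f_{j}$ gives $\|\sqrt{-L}(f_{k}-f_{j})\|_{1}\le C\|\nabla_{\mathcal{H}}(f_{k}-f_{j})\|_{L\log L}\to0$, so $(\sqrt{-L}f_{k})$ is Cauchy in $L^{1}(\mu)$ and converges to some $g\in L^{1}(\mu)$. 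As each $f_{k}\in\mathcal{F}\mathcal{C}_{b}^{\infty}\subset D(\sqrt{-L})\cap L^{1}(\mu)$, while $f_{k}\to f$ and $\sqrt{-L}f_{k}\to g$ in $L^{1}(\mu)$, the definition of $D_{1}(\sqrt{-L})$ gives $f\in D_{1}(\sqrt{-L})$, the value $\sqrt{-L}f=g$ being well defined because $D(\sqrt{-L})\cap L^{\infty}(\mu)$ is weak-$*$ dense in $L^{\infty}(\mu)$, as noted in Section~2. Passing to the limit in the cylindrical bound then yields
$$
\|\sqrt{-L}f\|_{1}=\lim_{k\to\infty}\|\sqrt{-L}f_{k}\|_{1}\le C\lim_{k\to\infty}\|\nabla_{\mathcal{H}}f_{k}\|_{L\log L}=C\,\|\nabla_{\mathcal{H}}f\|_{L\log L}.
$$

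The one genuinely delicate point is the first step, namely the passage from $C_{0}^{\infty}(\mathbb{R}^{d})$ to $\mathcal{F}\mathcal{C}_{b}^{\infty}$ with a constant that does not degenerate with the dimension. Concretely, one must check that the cutoff creates no uncontrolled gradient: the extra term $\varphi_{0}(x)\,R^{-1}(\nabla\chi)(x/R)$ is supported on an annulus $\{R\lesssim|x|\lesssim2R\}$, where the Gaussian mass is exponentially small, so it tends to $0$ in $L\log L$ as $R\to\infty$; that the remaining term $\chi(x/R)\nabla\varphi_{0}$, being dominated by the bounded function $|\nabla\varphi_{0}|$, also converges in $L\log L$; and that the $L^{2}(\mu)$-orthogonalization of the $l_{i}$ is an isometry for all the norms in play. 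Everything else is bookkeeping with the Luxemburg norm and with the definition of $D_{1}(\sqrt{-L})$.
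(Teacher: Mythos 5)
Your argument is correct and follows essentially the same route as the paper: the cylindrical case is exactly the $\alpha=0$ case of Theorem \ref{th:root_upper_bound} (equivalently Shigekawa's Theorem 1.1), and the general case is the standard density/approximation argument in $W^{1,L\log L}$, with the Cauchy-sequence step giving $f \in D_{1}(\sqrt{-L})$. You merely spell out details the paper leaves implicit (the cutoff from $C_{0}^{\infty}$ to $C_{b}^{\infty}$, the orthogonalization of the $l_i$, and the passage to the limit), and these are handled correctly.
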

\begin{proof}
For a function $f \in \mathcal{FC}^{\infty}_{b}$ this statement easily follows by Theorem 1.1 from \cite{Shigekawa}, 
this is also a particular case of Theorem \ref{th:main_l_log_l} from Section~3.
The general case follows from the standard approximation arguments since smooth cylindrical functions are dense in $W^{1,L\log L}$.
\end{proof}

\begin{lemma}\label{le:approximation_by_a}
Let $f \in W^{1,L\log L}$. There exist a universal constant $C > 0$
and a set $\Omega_f$ with $\mu(\Omega_f) = 1$
such that for any $t > 0$
$$
|A_{t}f(x) - f(x)| \leq C\sqrt{t}\sup_{s > 0}M_{s}|\sqrt{-L}f|(x), \, x \in \Omega_{f}.
$$
\end{lemma}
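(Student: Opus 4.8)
The plan is to deduce the statement directly from Theorem~\ref{th:main_pointwise_approximation}, after checking that its abstract hypotheses apply in the present setting and that $f\in D_1(\sqrt{-L})$. The Ornstein--Uhlenbeck semigroup $\{T_t\}$ on $L^2(\mathcal{W},\mu)$ is a symmetric Markov semigroup with generator $L$, and the weak-$*$ density required in Section~2 for $\sqrt{-L}f$ to be uniquely determined holds because $\mathcal{F}\mathcal{C}_b^\infty\subset D(\sqrt{-L})\cap L^\infty(\mu)$ is dense in $L^1(\mu)$. The remaining point, that $f\in W^{1,L\log L}$ implies $f\in D_1(\sqrt{-L})$, is Lemma~\ref{le:wiener_l_log_l}: picking smooth cylindrical $f_n\to f$ in $W^{1,L\log L}$ and applying the bound $\|\sqrt{-L}(f_n-f_m)\|_1\le C\|\nabla_{\mathcal{H}}(f_n-f_m)\|_{L\log L}$ shows that $(\sqrt{-L}f_n)$ is Cauchy in $L^1(\mu)$, hence converges to some $g=\sqrt{-L}f\in L^1(\mu)$ with $\|\sqrt{-L}f\|_1\le C\|\nabla_{\mathcal{H}}f\|_{L\log L}<\infty$, while $f_n\to f$ in $L^1(\mu)$.

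With these two facts in place, Theorem~\ref{th:main_pointwise_approximation} applies verbatim to $f$ and yields a universal constant $C>0$ together with the full-measure Borel set $\Omega_f$ built in its proof (a null set having been discarded there using Proposition~\ref{pr:hopf}) such that
\[
|A_tf(x)-f(x)|\le C\sqrt{t}\,\sup_{s>0}A_s|\sqrt{-L}f|(x)
\]
for every $t>0$ and every $x\in\Omega_f$. That the right-hand side is finite $\mu$-a.e. is seen from $|T_ug|\le T_u|g|$, which gives the pointwise bound $A_s|\sqrt{-L}f|\le 3\sup_{r>0}\frac1r\int_{[0,r]}T_u|\sqrt{-L}f|\,du$, combined with the Hopf--Dunford--Schwartz inequality (Proposition~\ref{pr:hopf}) applied to $|\sqrt{-L}f|\in L^1(\mu)$; this finiteness is already incorporated into $\Omega_f$.

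The only genuinely substantive step is the one carried out inside the proof of Theorem~\ref{th:main_pointwise_approximation}, namely the passage to the limit $f_n\to f$ in the pointwise inequality (the reduction from $D(\sqrt{-L})$ to $D_1(\sqrt{-L})$), and I expect this to be the main obstacle. One uses that each $A_t$ is an $L^1(\mu)$-contraction, so $A_tf_n\to A_tf$ in $L^1(\mu)$, while the sublinear maximal operator $g\mapsto\sup_{s>0}A_s|g|$ is of weak type $(1,1)$ by the estimate above (and $\sup_{s>0}|A_s|a|-A_s|b||\le\sup_{s>0}A_s|a-b|$), so $\sup_{s>0}A_s|\sqrt{-L}f_n|\to\sup_{s>0}A_s|\sqrt{-L}f|$ in measure. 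Passing to a subsequence along which $A_tf_n$, $f_n$ and $\sup_{s>0}A_s|\sqrt{-L}f_n|$ converge $\mu$-a.e., one takes limits in the inequality for rational $t$ and then invokes the continuity of $t\mapsto A_tf(x)$ on a full-measure set (exactly as in the proof of Theorem~\ref{th:main_pointwise_approximation}, the right-hand side being independent of $t$) to obtain the inequality for all $t>0$ on a single set $\Omega_f$.
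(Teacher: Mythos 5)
Your proposal is correct and follows essentially the same route as the paper: Lemma~\ref{le:wiener_l_log_l} (Shigekawa's bound applied to smooth cylindrical approximations, giving that $(\sqrt{-L}f_n)$ is Cauchy in $L^1$, hence $f\in D_1(\sqrt{-L})$) combined with a direct application of Theorem~\ref{th:main_pointwise_approximation}; the extra details you supply (weak type $(1,1)$ of $\sup_{s>0}A_s$ and the continuity in $t$ on a full-measure set for the passage from $D(\sqrt{-L})$ to $D_1(\sqrt{-L})$) merely flesh out the density argument already contained in the theorem's proof. Only a cosmetic slip: the uniqueness of $\sqrt{-L}f$ requires weak-$*$ density of $D(\sqrt{-L})\cap L^{\infty}$ in $L^{\infty}$ (in duality with $L^1$), not density in $L^1$, but this holds for $\mathcal{F}\mathcal{C}_b^\infty$ just as easily.
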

\begin{proof}
This follows immediately by Theorem \ref{th:main_pointwise_approximation} and Lemma \ref{le:wiener_l_log_l}.
\end{proof}

The next theorem is our main result for Sobolev functions on the Wiener space.
\begin{theorem}\label{th:main_wiener_space_theorem}
Let $f \in W^{1,L\log L}$.
There exist
a set $\Omega_f$ with $\mu(\Omega_f) = 1$ and
a universal constant $C > 0$
such that for any $x_0, x_1 \in \Omega_f$ with $x_1 - x_0 \in \mathcal{H}$
$$
|f(x_1) - f(x_0)| \leq C |x - y|_{\mathcal{H}}(M(x_0) + M(x_1)),
$$
where
$$
M(x) := \sup_{t > 0}\frac{1}{t}\int_{[0, t]}T_{s}|\sqrt{-L}f|(x)\,ds +
\sup_{t > 0}\frac{1}{t}\int_{[0, t]}T_{s}|\nabla_{\mathcal{H}} f|(x).
$$
\end{theorem}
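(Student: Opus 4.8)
The plan is to follow the scheme of \cite{AmrosioBrueTrevisan}: interpolate between $f$ and its smoothed version $A_t f$, control the two resulting error terms by the pointwise estimates already established, and then optimize the scale $t$ against the Cameron--Martin distance $|x_1-x_0|_{\mathcal{H}}$. First I would fix a representative of $f$ and let $\Omega_f$ be the intersection of the two full-measure sets furnished by Lemma~\ref{le:lip} and Lemma~\ref{le:approximation_by_a}; shrinking it further by a null set I may also assume, via Proposition~\ref{pr:hopf} applied to $|\nabla_{\mathcal{H}}f|\in L^1$ and to $\sqrt{-L}f\in L^1$ (the latter provided by Lemma~\ref{le:wiener_l_log_l}), together with the elementary bound $A_s g\le 2\sup_{\tau>0}\tfrac1\tau\int_{[0,\tau]}T_u g\,du$ for nonnegative $g$, that $M(x)<\infty$ for every $x\in\Omega_f$, so that the statement is a genuine Lusin-type bound and not vacuous. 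The crucial point is that both Lemma~\ref{le:lip} and Lemma~\ref{le:approximation_by_a} give their inequalities for \emph{all} $t>0$ simultaneously on $\Omega_f$, which is what legitimizes the optimization below.

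Next, I would fix $x_0,x_1\in\Omega_f$ with $h:=x_1-x_0\in\mathcal{H}$ and write $r:=|h|_{\mathcal{H}}$; if $r=0$ there is nothing to prove, so assume $r>0$ and choose $t:=r^2$. The triangle inequality gives
\[
|f(x_1)-f(x_0)|\le \sum_{i=0}^{1}|f(x_i)-A_t f(x_i)| + |A_t f(x_1)-A_t f(x_0)|.
\]
By Lemma~\ref{le:approximation_by_a} each of the first two summands is at most $C\sqrt{t}\,\sup_{s>0}A_s|\sqrt{-L}f|(x_i)\le C r\,M(x_i)$, since $\sqrt{t}=r$. By Lemma~\ref{le:lip} the last summand is at most $r\,e^{r^2/(4t)}\bigl(A_t|\nabla_{\mathcal{H}}f|(x_0)+A_t|\nabla_{\mathcal{H}}f|(x_1)\bigr)\le e^{1/4}\,r\,(M(x_0)+M(x_1))$, because $r^2/(4t)=1/4$ and $A_t|\nabla_{\mathcal{H}}f|(x_i)\le M(x_i)$. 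Summing these bounds yields $|f(x_1)-f(x_0)|\le C'\,r\,(M(x_0)+M(x_1))$ with $C'$ universal, which is exactly the assertion.

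Since the two main inequalities are already in hand, no single step is really an obstacle; the only points requiring care are (i) that the good set $\Omega_f$ can be taken independent of $t$ and of the pair $x_0,x_1$, which is precisely how Lemmas~\ref{le:lip} and~\ref{le:approximation_by_a} are phrased, and (ii) the choice $t=|x_1-x_0|_{\mathcal{H}}^2$, which simultaneously neutralizes the Gaussian factor $e^{|x_1-x_0|_{\mathcal{H}}^2/(4t)}$, turning it into the constant $e^{1/4}$, and matches the $\sqrt{t}$ loss in the approximation estimate to the desired term linear in the distance. If one wishes, the a.e.-finiteness of $M$ recorded in the first paragraph also shows that $M\in L^{1,\infty}(\mu)$, so the constructed version of $f$ is Lipschitz off a set of arbitrarily small measure, recovering the qualitative Lusin statement.
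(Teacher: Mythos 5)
Your proposal is correct and follows essentially the same route as the paper: the same triangle-inequality decomposition through $A_tf$, the same use of Lemma~\ref{le:lip} and Lemma~\ref{le:approximation_by_a} on the common full-measure set, and the same choice $t=|x_1-x_0|_{\mathcal{H}}^2$ to balance the $\sqrt{t}$ loss against the Gaussian factor. The extra remarks on the $r=0$ case and the a.e.\ finiteness of $M$ are harmless refinements of the same argument.
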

\begin{proof}
Let $\Omega_f$ be the intersection of the sets of full measure provided by
Lemma \ref{le:lip} and Lemma \ref{le:approximation_by_a}.
For any $x_0, x_1 \in \Omega_f$ and any $t > 0$
\begin{multline*}
|f(x_1) - f(x_0)|\\
 \leq |f(x_1) - A_{t}f(x_1)| +
|A_{t}f(x_1) - A_{t} f(x_0)| +  |f(x_0) - A_{t}f(x_0)|\\
\leq C\sqrt{t}\sup_{s > 0}M_{s}f(x_1) + C\sqrt{t}\sup_{s > 0}M_{s}f(x_0) \\
+
|x_1 - x_0|_{\mathcal{H}}e^{\frac{|x_1 - x_0|_{\mathcal{H}}^2}{4t}}\bigl(
A_{t}|\nabla_{\mathcal{H}}f|(x_1) + A_{t}|\nabla_{\mathcal{H}}f|(x_0)
\bigr).
\end{multline*}
It is easy to see that picking $t := |x_1 - x_0|^{2}_{\mathcal{H}}$ 
and taking into account inequality \ref{eq:trivial_ineq}
yields the required estimate.
\end{proof}

\begin{theorem}\label{th:lusin_wiener_space}
Let $f \in W^{1,L\log L}$. Then for every $\varepsilon >0$
there exists an $\mathcal{H}$-Lipschitz  $\mu$-measurable function $g_\varepsilon$, i.e.
$$
|g_{\varepsilon}(x_1) - g_{\varepsilon}(x_0)| \leq C_{\varepsilon}|x_1 - x_0|_{H},  \ x_0, x_1 \in \mathcal{W}, \ x_1 - x_0 \in \mathcal{H}
$$
such that
$$
\mu\bigl(x:\ g_{\varepsilon}(x) \neq f(x)\bigr) \leq \varepsilon.
$$
\end{theorem}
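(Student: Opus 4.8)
The plan is to derive the statement from the pointwise estimate of Theorem~\ref{th:main_wiener_space_theorem} together with the weak $(1,1)$ bound contained in Proposition~\ref{pr:hopf}, followed by a McShane-type extension; the only real difficulty will be the measurability of the extension in infinite dimensions. First I would record that the maximal function
\[
M(x) = \sup_{t>0}\tfrac1t\!\int_{[t,2t]}\!T_s|\sqrt{-L}f|(x)\,ds + \sup_{t>0}\tfrac1t\!\int_{[t,2t]}\!T_s|\nabla_{\mathcal H}f|(x)\,ds
\]
is finite $\mu$-a.e. Indeed, by Lemma~\ref{le:wiener_l_log_l} one has $\sqrt{-L}f\in L^1(\mu)$ with $\|\sqrt{-L}f\|_1\le C\|\nabla_{\mathcal H}f\|_{L\log L}$, and $\nabla_{\mathcal H}f\in L\log L\subset L^1(\mu)$; writing $A_t g=\tfrac1t\int_{[0,2t]}T_sg\,ds-\tfrac1t\int_{[0,t]}T_sg\,ds$ and using $T_s g\ge0$ for $g\ge0$, one gets $A_tg\le 2\sup_{r>0}\tfrac1r\int_{[0,r]}T_sg\,ds$, so Proposition~\ref{pr:hopf} and the embedding $\|\cdot\|_1\le C\|\cdot\|_{L\log L}$ yield
\[
\mu\bigl(\{M\ge\lambda\}\bigr)\le\frac{C_0}{\lambda}\,\|\nabla_{\mathcal H}f\|_{L\log L},\qquad\lambda>0 .
\]

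Next, given $\varepsilon>0$, I would choose $\lambda=\lambda_\varepsilon$ with $\mu(\{M\ge\lambda\})<\varepsilon$ and, using the inner regularity of $\mu$, fix a Borel set $S\subset\Omega_f\cap\{M<\lambda\}$ with $\mu(\mathcal W\setminus S)<\varepsilon$ on which the chosen Borel representative of $f$ agrees with the version supplied by Theorem~\ref{th:main_wiener_space_theorem} (here $\Omega_f$ is the full-measure set of that theorem). For $x_0,x_1\in S$ with $x_1-x_0\in\mathcal H$, Theorem~\ref{th:main_wiener_space_theorem} gives $|f(x_1)-f(x_0)|\le C|x_1-x_0|_{\mathcal H}(M(x_0)+M(x_1))\le 2C\lambda\,|x_1-x_0|_{\mathcal H}$, so $f|_S$ is $\mathcal H$-Lipschitz with constant $C_\varepsilon:=2C\lambda$.

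I would then set
\[
g_\varepsilon(x):=\inf\bigl\{\,f(y)+C_\varepsilon|x-y|_{\mathcal H}\ :\ y\in S\,\bigr\},
\]
with the conventions $|w|_{\mathcal H}=+\infty$ for $w\notin\mathcal H$ and $g_\varepsilon(x):=0$ when no $y\in S$ lies on the fiber $x+\mathcal H$. On the fiber of a point of $S+\mathcal H$ the triangle inequality bounds this infimum below by $f(y')-C_\varepsilon|x-y'|_{\mathcal H}$ for a fixed $y'\in S$ in that fiber, so it is finite; the standard McShane argument then gives $g_\varepsilon=f$ on $S$ (use $y=x$ for ``$\le$'' and the bound of the previous paragraph for ``$\ge$'') and $\mathcal H$-Lipschitz continuity with constant $C_\varepsilon$: if $x-x'\in\mathcal H$ then, since $\mathcal H+\mathcal H=\mathcal H$, either both infima run over the same nonempty family of $y$ (in which case the bound is the usual one) or $g_\varepsilon$ vanishes identically on the common fiber. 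Hence $\mu(\{g_\varepsilon\neq f\})\le\mu(\mathcal W\setminus S)<\varepsilon$, and it remains only to check that $g_\varepsilon$ is $\mu$-measurable.

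This last point is where I expect the main obstacle to be: in infinite dimensions $|x-y|_{\mathcal H}$ is not continuous and $S$ need not be $\mathcal H$-separable, so one cannot reduce the infimum to a countable one by an elementary density argument. My plan is to use the projection theorem for Souslin sets instead. The map $(x,y)\mapsto|x-y|_{\mathcal H}$ is Borel on $\mathcal W\times\mathcal W$, being the supremum of countably many jointly continuous functions $(x,y)\mapsto\ell(x-y)$ with $\ell$ in a countable $L^2(\mu)$-dense subset of the unit ball of $\mathcal W^*$; together with a Borel representative of $f$ this makes $\Psi(x,y):=f(y)+C_\varepsilon|x-y|_{\mathcal H}$ Borel. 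Since $S$ is Borel, for every $t\in\mathbb R$ the set $\{x:\exists\,y\in S,\ \Psi(x,y)<t\}$ is the projection onto the first factor of a Borel subset of $\mathcal W\times\mathcal W$, hence Souslin, hence universally---in particular $\mu$---measurable, and likewise $S+\mathcal H$ is $\mu$-measurable; combining these descriptions of the sublevel sets of $g_\varepsilon$ yields its $\mu$-measurability and finishes the proof. All the remaining ingredients are the by now standard chain ``pointwise estimate $\Rightarrow$ maximal inequality $\Rightarrow$ good set with controlled Lipschitz constant $\Rightarrow$ McShane extension''.
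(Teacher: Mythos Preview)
Your proof is correct and follows the same overall strategy as the paper: control the maximal function $M$ in weak $L^1$ via Proposition~\ref{pr:hopf} (and Lemma~\ref{le:wiener_l_log_l}), cut at a level $\lambda$ to produce a set of measure $>1-\varepsilon$ on which Theorem~\ref{th:main_wiener_space_theorem} forces $f$ to be $\mathcal H$-Lipschitz, and then extend. The only difference is in the extension step. The paper simply invokes the result of \"Ust\"unel--Zakai \cite{UstunelZakai} as a black box to extend $f|_{\Omega_{f,\varepsilon}}$ to a $\mu$-measurable $\mathcal H$-Lipschitz function on all of $\mathcal W$, whereas you carry out the McShane construction explicitly and supply the measurability argument yourself via the projection theorem for Souslin sets. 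Your route is more self-contained and makes transparent exactly where the infinite-dimensional subtlety (non-continuity of $|\cdot|_{\mathcal H}$, uncountability of the infimum) enters; the paper's citation hides precisely this point. Both approaches are valid, and your explicit argument is essentially what lies behind the cited reference.
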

\begin{proof}
Applying the Hopf--Dunford--Schwartz maximal inequality (see Proposition \ref{pr:hopf}) to the semigroup $\{T_t\}$ and the integrable functions
$|\sqrt{-L}f|$ and  $|\nabla_{\mathcal{H}}f|$ yields that for every $\lambda > 0$
$$
\mu \bigl(x: CM(x) \geq \lambda \bigr) \leq
C' \frac{\|\sqrt{-L}f\|_1 +\|\nabla_{\mathcal{H}}f\|_1 }{\lambda}
\leq C'' \frac{\|\nabla_{\mathcal{H}} f\|_{L\log L}}{\lambda},
$$
where $C$ is the constant from Theorem \ref{th:main_wiener_space_theorem}.
Let us choose
$$
\lambda := \frac{1}{\varepsilon C''\|\nabla_{\mathcal{H}} f\|_{L\log L}}
$$
and set
$$
\Omega_{f, \varepsilon} :=  \bigl\{x: CM(x) \leq \lambda \bigr\}.
$$
Then:
$$
\mu(W \setminus \Omega_{f, \varepsilon}) \leq \varepsilon
$$
and for any $x_0, x_1 \in \Omega_{f, \varepsilon}$
$$
|f(x_0) - f(x_1)| \leq \lambda |x_0 - x_1|_{\mathcal{H}}.
$$
Now we can apply the result from \cite{UstunelZakai} to
the function
$
\left.f\right|_{\Omega_{f, \varepsilon}}
$
and obtain a measurable $\mathcal{H}$-Lipschitz function $g_{\varepsilon}$
defined on the whole space $\mathcal{W}$ such that
$$
\left.g\right|_{\Omega_{f, \varepsilon}} = \left.f\right|_{\Omega_{f, \varepsilon}}.
$$
It is clear that by construction
$$
\mu\bigl(x:\ g_{\varepsilon}(x) \neq f(x)\bigr) \leq \varepsilon.
$$
\end{proof}

\begin{remark}
As it is clear from the proofs,
the statements of Theorem~\ref{th:main_wiener_space_theorem}
and Theorem~\ref{th:lusin_wiener_space} remain valid for any function
$$f \in W^{1,1} \cap D_{1}(\sqrt{-L}).$$
However, the case of $f \in W^{1,1}$ or $f \in BV$ when the underlying space $W$ is infinite-dimensional is still open, see also the discussion of this problem in \cite{AmrosioBrueTrevisan}.
\end{remark}
\begin{remark}
In the paper \cite{Alberti} G. Alberti proved that any Borel vector field on
$\mathbb{R}^d$ coincides with the
gradient of some $C^{1}$ function outside of a set of
arbitrarily small Lebesgue measure. This result was extended to the Wiener space setting in \cite{Shaposhnikov10}.
\end{remark}

\subsection{Da Prato's Sobolev spaces}
We refer to the book \cite{Daprato} (see also \cite{Bogachev}, \cite{B10}) for a detailed introduction into this topic. In this setting
the underlying space $\mathcal{W} = H$ is a separable Hilbert space, $m$
is a centered Gaussian measure which  is not concentrated on a closed proper linear subspace of $H$. We denote by
$Q$ the covariance operator associated with $m$. It is well-known
(see \cite{Daprato}, \cite{Bogachev}) that in this case $Q$ is a
nonnegative symmetric operator with finite trace. The Cameron--Martin space of $m$ will be denoted by $\mathcal{H}$. In fact,
$\mathcal{H}$ coincides with the range of $Q^{\frac{1}{2}}$ and
moreover $\| x \|_{\mathcal{H}} = |Q^{-\frac{1}{2}}x|$.
Using the Hilbertian structure of the underlying space $H$ we can introduce the Sobolev spaces $W^{1, p}(H, m)$ obtained as the closure
of smooth cylindrical functions with respect to the norm
$$
\|f\|_{1, p} := \|\nabla f\|_{p} + \|f\|_p.
$$
The difference with the Sobolev classes on the Wiener space which were considered in the previous subsection is that here the gradient
with respect to the Hilbertian structure of the underlying space $H$
is involved rather than with respect to the structure of  the Cameron--Martin space
$\mathcal{H}$. In this context the natural semigroup is given by
a Mehler-type formula
\begin{multline*}
P_{t}f(x) :=  \int_{H}f(e^{At}x + \sqrt{1 - e^{2At}}y)\,dm(y) \\
=\int_{H}f(e^{At}x + y)dN_{Q_t}(y) = \int_{H}f(y)dN_{e^{At}x, Q_t}(y),
\end{multline*}
where we have set
$$
A := -\frac{1}{2}Q^{-1}, \ Q_t := \int_{[0, t]}e^{2As}\,ds = Q\bigl(1 - e^{2At}\bigr)
$$
and $N_{e^{At}x, Q_t}$ denotes the the unique Gaussian measure with mean $e^{At}x$ and covariance $Q_t$.
In this section we denote by $L$ the generator of the semigroup $\{P_t\}$:
$$
L := \frac{1}{2}\Delta + \langle Ax, \nabla \rangle.
$$
We will also assume that for the operator $A$ the following bound holds:
$$
A \leq -\beta,
$$
where $\beta$ is a positive constant.
It is easy to see that in the infinite-dimensional setting we still have the commutation
identity
$$
\nabla P_{t}f  = e^{-At}P_{t}\nabla f.
$$
Consequently, in this case
$$
|\nabla P_{t}f| \leq e^{-\beta t}P_{t}|\nabla f|.
$$
Similarly to the case of the abstract Wiener space we can introduce
the Sobolev class $W^{1, L\log L}(H, m)$ with the norm
$$
\|f\|_{1, L\log L} := \|\nabla f\|_{L\log L} + \|f\|_{L\log L}.
$$
Now we can make use of the extension of Shigekawa's bound established in Section 3 and obtain the  natural counterpart of Lemma \ref{le:wiener_l_log_l} for Da Prato's spaces.
\begin{lemma}\label{le:da_prato_l_log_l}
Let $f \in W^{1,L\log L}(H, m)$, i.e.
$$
\int_{W}|\nabla f(x)|\log(1 + |\nabla f(x)|)\,d\mu(x) < \infty.
$$
Then $f \in D_{1}(\sqrt{-L})$ and
$$
\|\sqrt{-L}f\|_1 \leq C\|\nabla f\|_{L\log L}.
$$
\end{lemma}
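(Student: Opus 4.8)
The plan is to argue exactly as for Lemma \ref{le:wiener_l_log_l}: first establish the bound for smooth cylindrical functions by reducing to the finite-dimensional Meyer-type inequality of Section~3, and then remove the cylindrical restriction by approximation. For the first step, recall that $Q$, being a nonnegative symmetric trace-class operator, has an orthonormal eigenbasis $(e_i)$ of $H$, and that the smooth cylindrical functions depending only on the coordinates $\langle x, e_1\rangle, \dots, \langle x, e_n\rangle$ remain dense in $W^{1,L\log L}(H, m)$. Fix such a function $f$, say depending on the first $d$ eigen-coordinates, and note that $\mathrm{span}(e_1,\dots,e_d)$ is invariant under $A = -\tfrac12 Q^{-1}$. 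Restricting everything to this subspace we may assume $H = \mathbb{R}^d$, $m = N_Q$ with $Q$ diagonal, and $A$ diagonal with eigenvalues $-\lambda_i$, where $A \le -\beta$ becomes $\beta \le \lambda_1 \le \dots \le \lambda_d$, so the standing hypothesis (\ref{ineq:lower_bound_for_a}) of Section~3 holds. Theorem \ref{th:root_upper_bound} applied with $\alpha = 0$ then gives
$$
\|\sqrt{-L}f\|_1 \le C\|\nabla f\|_{L\log L},
$$
with a constant $C$ independent of $d$ (it may depend on $\beta$, but $\beta$ is fixed, being determined by $m$); the positivity $\beta > 0$ enters only through (\ref{ineq:lower_bound_for_a}), ensuring that the auxiliary submartingales of Lemma \ref{le:submartingales} and the semigroups $R^{\lambda_i}_t$ are well defined at $\alpha = 0$.

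Next, choose smooth cylindrical functions $f_n$ of the above form with $\|f_n - f\|_{1, L\log L}\to 0$; since the embedding $L\log L \hookrightarrow L^1(m)$ is continuous, also $f_n \to f$ and $\nabla f_n \to \nabla f$ in $L^1(m)$. Applying the previous step to $f_n - f_m$ gives
$$
\|\sqrt{-L}f_n - \sqrt{-L}f_m\|_1 \le C\|\nabla f_n - \nabla f_m\|_{L\log L} \to 0 \quad (n, m \to \infty),
$$
so $(\sqrt{-L}f_n)$ converges in $L^1(m)$ to some $g$. By the definition of $D_1(\sqrt{-L})$ recalled in Section~2 this shows $f \in D_1(\sqrt{-L})$, with $\sqrt{-L}f = g$ uniquely determined because for an Ornstein--Uhlenbeck semigroup the required weak-$*$ density holds (smooth cylindrical functions already suffice). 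Letting $n\to\infty$ in the cylindrical inequality yields
$$
\|\sqrt{-L}f\|_1 = \lim_{n\to\infty}\|\sqrt{-L}f_n\|_1 \le C\lim_{n\to\infty}\|\nabla f_n\|_{L\log L} = C\|\nabla f\|_{L\log L},
$$
which is the assertion.

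The only delicate point is the finite-dimensional reduction: one must check that projecting a cylindrical function onto a $Q$-eigenspace produces an operator of precisely the type covered by Theorem \ref{th:root_upper_bound} --- a diagonal drift whose eigenvalues are bounded below by the fixed $\beta > 0$ --- so that the resulting constant is uniform in the dimension. This is exactly why Section~3 was carried out for general Ornstein--Uhlenbeck operators rather than only for $A = \mathrm{Id}$ as in \cite{Shigekawa}, since here $A = -\tfrac12 Q^{-1}$ is typically not scalar. Everything else is the same soft approximation argument as in Lemma \ref{le:wiener_l_log_l}.
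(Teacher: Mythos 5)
Your proposal is correct and follows essentially the same route as the paper: the paper's proof simply invokes Theorem \ref{th:root_upper_bound} (with $\alpha=0$) for smooth cylindrical functions and then cites density of such functions in $W^{1,L\log L}(H,m)$ together with standard approximation. Your write-up merely makes explicit the finite-dimensional reduction (diagonalizing $Q$ so that the hypothesis (\ref{ineq:lower_bound_for_a}) holds with the fixed $\beta$) and the Cauchy-sequence argument giving $f\in D_1(\sqrt{-L})$, which is exactly what the paper leaves implicit.
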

\begin{proof}
For a function $f \in \mathcal{FC}^{\infty}_{b}$ this statement follows by Theorem \ref{th:root_upper_bound} from Section~3.
The general Sobolev case is again handled by the standard approximation arguments using the density of smooth cylindrical functions in $W^{1,L\log L}(H, m)$.
\end{proof}
The rest of our intermediate steps work the same as in the Wiener space setting. Therefore, let us conclude this section with the formulation of the final results.
\begin{theorem}\label{th:main_da_prato_theorem}
Let $f \in W^{1,L\log L}(H, m)$.
There exist
a set $\Omega_f$ with $m(\Omega_f) = 1$ and
a universal positive constant $C$
such that for any $x_0, x_1 \in\Omega_f$
$$
|f(x_1) - f(x_0)| \leq C |x_1 - x_0|(M(x_0) + M(x_1)),
$$
where
$$
M(x) := \sup_{t > 0}\frac{1}{t}\int_{[0, t]}P_{s}|\sqrt{-L}f|(x)\,ds +
\sup_{t > 0}\frac{1}{t}\int_{[0, t]}P_{s}|\nabla f|(x).
$$
\end{theorem}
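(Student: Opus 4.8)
The plan is to follow, essentially verbatim, the proof of Theorem~\ref{th:main_wiener_space_theorem}; as already indicated, all of its steps have exact counterparts here, the only point genuinely requiring verification being that the semigroup $\{P_t\}$ enjoys a log--convexity property measured in the Hilbert norm $|\cdot|$ of $H$ rather than in the Cameron--Martin norm. So the first step is to prove the analogue of Lemma~\ref{le:log_convex}: for every nonnegative Borel $g\in L^1$ and every $t>0$,
\[
P_{t}g((1-s)x_0+sx_1)\le\exp\Bigl\{\tfrac{s(1-s)}{2t}\,|x_1-x_0|^{2}\Bigr\}\,(P_{t}g(x_0))^{1-s}(P_{t}g(x_1))^{s},\qquad s\in[0,1].
\]
To obtain this I would write $P_{t}g(x)=\int_{H}g(z)\,dN_{e^{At}x,Q_t}(z)$ and note that, after factoring out the Gaussian density, the map $w\mapsto\log\int_{H}g(w+z)\,dN_{Q_t}(z)$ equals, up to an additive constant, $-\tfrac12\langle Q_t^{-1}w,w\rangle$ plus the logarithm of a Laplace transform of a nonnegative measure, hence is semiconvex with modulus $-Q_t^{-1}$. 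Substituting $w=e^{At}x$ and using that $A$, $Q$, $e^{At}$ and $Q_t$ all commute, the exponent produced is $\tfrac{s(1-s)}{2}\langle Q_t^{-1}e^{2At}(x_1-x_0),x_1-x_0\rangle$; since $Q_t^{-1}e^{2At}=-2A\,e^{2At}(1-e^{2At})^{-1}$ is diagonal with eigenvalues $\tfrac{2\lambda_i}{e^{2\lambda_i t}-1}$, and $e^{u}\ge1+u$ together with $\lambda_i\ge\beta>0$ gives $\tfrac{2\lambda_i}{e^{2\lambda_i t}-1}\le\tfrac1t$, the displayed bound follows (in infinite dimensions via the usual finite--dimensional approximation). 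Averaging over $s\in[t,2t]$ and applying H\"older's inequality transfers the inequality to $A_{t}:=\tfrac1t\int_{[t,2t]}P_{s}\,ds$, exactly as in the Wiener setting.

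With this in hand I would reproduce Lemma~\ref{le:lip}. For a smooth cylindrical $f$ and $x_0,x_1\in H$, integrating $\nabla A_{t}f$ along the segment $[x_0,x_1]$ gives $|A_{t}f(x_1)-A_{t}f(x_0)|\le|x_1-x_0|\int_{[0,1]}|\nabla A_{t}f((1-s)x_0+sx_1)|\,ds$; by the commutation relation one has $|\nabla P_{u}f|\le e^{-\beta u}P_{u}|\nabla f|\le P_{u}|\nabla f|$, hence $|\nabla A_{t}f|\le A_{t}|\nabla f|$, and feeding in the log--convexity of $A_{t}|\nabla f|$ from the first step together with $a^{1-s}b^{s}\le a+b$ yields
\[
|A_{t}f(x_1)-A_{t}f(x_0)|\le|x_1-x_0|\,e^{\frac{|x_1-x_0|^{2}}{4t}}\bigl(A_{t}|\nabla f|(x_0)+A_{t}|\nabla f|(x_1)\bigr).
\]
A Fubini argument as in the proof of Theorem~\ref{th:main_pointwise_approximation} shows that $t\mapsto A_{t}f(x)$ and $t\mapsto A_{t}|\nabla f|(x)$ are continuous for $m$--a.e.\ $x$; then approximating $f\in W^{1,L\log L}(H,m)$ by smooth cylindrical functions and passing to a.e.\ convergent subsequences of $A_{t}f_n$ and $A_{t}|\nabla f_n|$ extends the estimate to a Borel set of full measure simultaneously for all $t>0$.

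Finally, by Lemma~\ref{le:da_prato_l_log_l} we have $f\in D_{1}(\sqrt{-L})$ with $\|\sqrt{-L}f\|_1\le C\|\nabla f\|_{L\log L}$, and since $\{P_t\}$ is a symmetric Markov semigroup Theorem~\ref{th:main_pointwise_approximation} applies and gives $|A_{t}f(x)-f(x)|\le C\sqrt{t}\,\sup_{s>0}A_{s}|\sqrt{-L}f|(x)$ on a set $\Omega_f$ of full measure. Intersecting this with the full--measure set from the previous step, for $x_0,x_1\in\Omega_f$ and arbitrary $t>0$ I would split $f(x_1)-f(x_0)=(f(x_1)-A_{t}f(x_1))+(A_{t}f(x_1)-A_{t}f(x_0))+(A_{t}f(x_0)-f(x_0))$, insert the two estimates just obtained, and finally choose $t:=|x_1-x_0|^{2}$, which balances the $\sqrt t$ factor against $e^{|x_1-x_0|^{2}/(4t)}$ and delivers the asserted inequality with $M$ as in the statement. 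The only place where new work is required is the log--convexity of $\{P_t\}$ in the Hilbert norm, and there the whole content is the uniform bound $\tfrac{2\lambda}{e^{2\lambda t}-1}\le\tfrac1t$ over the spectrum of $-A$; the remaining arguments are a transcription of Lemmas~\ref{le:lip} and~\ref{le:approximation_by_a} and the proof of Theorem~\ref{th:main_wiener_space_theorem}, with the Cameron--Martin norm replaced throughout by the norm of $H$.
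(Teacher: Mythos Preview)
Your proposal is correct and follows precisely the route the paper intends: the paper itself does not give a separate proof of this theorem, merely remarking that ``the rest of our intermediate steps work the same as in the Wiener space setting,'' and you have faithfully carried out that transcription. In fact you supply more detail than the paper does, in particular the verification that $\{P_t\}$ is log-convex with respect to the Hilbert norm of $H$ via the eigenvalue bound $\tfrac{2\lambda}{e^{2\lambda t}-1}\le\tfrac1t$, which is exactly the missing ingredient needed to adapt Lemma~\ref{le:log_convex} and hence Lemma~\ref{le:lip} to the Da~Prato setting.
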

As a by-product we obtain a Lusin-type approximation for functions
from Da-Prato's Sobolev class analogous to Theorem \ref{th:lusin_wiener_space}. The key difference with the case of the Wiener space is that here the Lipschtiz functions with respect to the norm of the underlying Hilbert space are involved rather than the functions which are Lipschtiz-continuous along the Cameron--Martin space $\mathcal{H}$.
\begin{theorem}\label{th:lusin_da_prato}
Let $f \in W^{1,L\log L}(H, m)$. Then for every $\varepsilon >0$
there exists a Lipschitz  $m$-measurable function $g_\varepsilon$
such that
$$
\mu\bigl(x:\ g_{\varepsilon}(x) \neq f(x)\bigr) \leq \varepsilon.
$$
\end{theorem}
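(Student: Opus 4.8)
The plan is to run the argument of Theorem~\ref{th:lusin_wiener_space} essentially verbatim, with the standard Ornstein--Uhlenbeck semigroup $\{T_t\}$ and the Cameron--Martin gradient $\nabla_{\mathcal{H}}$ replaced by the Mehler-type semigroup $\{P_t\}$ and the gradient $\nabla$ along the underlying Hilbert structure, and with the $\mathcal{H}$-Lipschitz extension theorem of \cite{UstunelZakai} replaced by the elementary McShane extension formula for real-valued Lipschitz functions on a metric space.

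First I would invoke Theorem~\ref{th:main_da_prato_theorem}: there are a set $\Omega_f$ with $m(\Omega_f)=1$ and a universal constant $C>0$ such that
$$
|f(x_1)-f(x_0)| \le C|x_1-x_0|\bigl(M(x_0)+M(x_1)\bigr), \quad x_0,x_1\in\Omega_f,
$$
where $M(x)=\sup_{t>0}\frac1t\int_{[t,2t]}P_s|\sqrt{-L}f|(x)\,ds+\sup_{t>0}\frac1t\int_{[t,2t]}P_s|\nabla f|(x)\,ds$. Exactly as in the proof of Theorem~\ref{th:main_pointwise_approximation}, a Fubini argument shows that for $m$-a.e.\ $x$ the maps $t\mapsto\frac1t\int_{[t,2t]}P_s|\sqrt{-L}f|(x)\,ds$ and $t\mapsto\frac1t\int_{[t,2t]}P_s|\nabla f|(x)\,ds$ are continuous on $(0,\infty)$, so $M$ agrees $m$-a.e.\ with a countable supremum of Borel functions and is hence Borel measurable.

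Next, by Lemma~\ref{le:da_prato_l_log_l} we have $f\in D_1(\sqrt{-L})$ with $\|\sqrt{-L}f\|_1\le C\|\nabla f\|_{L\log L}$, while trivially $\|\nabla f\|_1\le\|\nabla f\|_{L\log L}$. Since $\frac1t\int_{[t,2t]}P_sg\,ds\le2\sup_{r>0}\frac1r\int_{[0,r]}P_sg\,ds$ for nonnegative $g$, the Hopf--Dunford--Schwartz inequality (Proposition~\ref{pr:hopf}) applied to $\{P_t\}$ and to $|\sqrt{-L}f|,|\nabla f|\in L^1$ yields
$$
m\bigl(x:CM(x)\ge\lambda\bigr)\le C'\frac{\|\sqrt{-L}f\|_1+\|\nabla f\|_1}{\lambda}\le C''\frac{\|\nabla f\|_{L\log L}}{\lambda}
$$
for every $\lambda>0$. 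Choosing $\lambda_\varepsilon:=C''\|\nabla f\|_{L\log L}/\varepsilon$ and $\Omega_{f,\varepsilon}:=\Omega_f\cap\{x:CM(x)\le\lambda_\varepsilon\}$ gives $m(H\setminus\Omega_{f,\varepsilon})\le\varepsilon$, and Theorem~\ref{th:main_da_prato_theorem} then shows that $\left.f\right|_{\Omega_{f,\varepsilon}}$ is Lipschitz for $|\cdot|$ with constant $2\lambda_\varepsilon$.

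Finally, the extension step is immediate in this setting: since $\left.f\right|_{\Omega_{f,\varepsilon}}$ is a real-valued Lipschitz function on the subset $\Omega_{f,\varepsilon}$ of the metric space $(H,|\cdot|)$, the McShane formula $g_\varepsilon(x):=\inf_{y\in\Omega_{f,\varepsilon}}\bigl(f(y)+2\lambda_\varepsilon|x-y|\bigr)$ defines a $2\lambda_\varepsilon$-Lipschitz function on all of $H$ coinciding with $f$ on $\Omega_{f,\varepsilon}$; being Lipschitz it is continuous, hence Borel and $m$-measurable, so $m\bigl(x:g_\varepsilon(x)\neq f(x)\bigr)\le m(H\setminus\Omega_{f,\varepsilon})\le\varepsilon$. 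I do not expect a genuine obstacle here: the only delicate point, namely the Borel measurability of $M$ and hence of $\Omega_{f,\varepsilon}$, reduces to the $m$-a.e.\ continuity in $t$ of the averaged semigroup, which is precisely the Fubini argument already used in Theorem~\ref{th:main_pointwise_approximation} and Lemma~\ref{le:lip}. The content of the construction is simply that the lemmas of this subsection were arranged so as to transfer word for word from the Wiener case, the one substantive change being that the target class of approximants is now Lipschitz for the Hilbert norm rather than along the Cameron--Martin space.
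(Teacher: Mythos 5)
Your proposal is correct and takes essentially the same approach as the paper, which gives no separate proof here but states that the Wiener-space argument of Theorem \ref{th:lusin_wiener_space} (maximal-function bound via Lemma \ref{le:da_prato_l_log_l} and Proposition \ref{pr:hopf}, then restriction to a sublevel set of $M$ and Lipschitz extension) carries over unchanged. Your one substantive observation --- that the Ustunel--Zakai extension can be replaced by the elementary McShane formula because $f$ restricted to $\Omega_{f,\varepsilon}$ is Lipschitz for the Hilbert norm itself --- is exactly the ``key difference'' the paper flags before stating the theorem, and your choice $\lambda_\varepsilon := C''\|\nabla f\|_{L\log L}/\varepsilon$ even corrects the inverted expression for $\lambda$ appearing in the paper's Wiener-case proof.
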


\begin{remark}
The statements of Theorem \ref{th:main_da_prato_theorem}
and Theorem \ref{th:lusin_da_prato} remain valid for any function
$$f \in W^{1,1}(H, m) \cap D_{1}(\sqrt{-L}).$$
\end{remark}

\vskip .1in

\centerline{{\bf\it Acknowledgment}}

I would like to thank V.I. Bogachev and M. R\"ockner for fruitful discussions and comments, I am also grateful to L. Ambrosio for some useful information on this subject.
Part of this work was done during a visit to the
Max Planck Institute for Mathematics in the Sciences
in Leipzig. I would like to thank F. Otto and B. Gess for their hospitality
and useful remarks.

This research is supported by the Russian Science Foundation Grant 17-11-01058
at Lomonosov Moscow State University.

\end{document}